\theoremstyle{plain}
\newtheorem{thm}{\protect\theoremname}
  \theoremstyle{plain}
  \newtheorem*{thm*}{\protect\theoremname}
\theoremstyle{plain}
\newtheorem{lem}{\protect\lemmaname}
\theoremstyle{plain}
\theoremstyle{plain}
\newtheorem{cor}{\protect\corollaryname}
\providecommand{\lemmaname}{Lemma}
\providecommand{\theoremname}{Theorem}
\providecommand{\propositionname}{Proposition}
\providecommand{\corollaryname}{Corollary}
\title{Extremal decomposition problems for p-harmonic radius}
\author{ Sergei Kalmykov, Elena Prilepkina }
\date{\today}
\begin{document}
\maketitle

\begin{abstract}
We extend  classical results by Lavrent'ev and Kufarev concerning the product of the conformal radii of planar non-overlapping domains. We also extend relatively recent results  for  the case of domains in the $n$-dimensional Euclidean space,
$n \geq 3$, with conformal radii replaced by harmonic ones. Namely, we get analogues of these results in $n$-dimensional Euclidean space in terms of $p$-harmonic radius. The proofs are based on technique of modulii of curve families and dissymmetrization of such families.

Keywords:  p-harmomic functions, p-harmonic radius, Green function, extremal decomposition

Classification (MSC 2010): 31A15, 31C45, 31B99

\end{abstract}

\section{Introduction}

Extremal decomposition problems are due to well-known Lavrentiev's inequality
\begin{equation}\label{lavr}
r(a_1,D_1)\cdot r(a_2,D_2)\leq |a_1-a_2|^2
\end{equation}
where $r(a_1,D_1)$ and $r(a_2,D_2)$ are conformal (inner) radii of planar disjoint  domains, $a_i\in D_i$, $i\in 1,2$.
  The conformal radius plays an important role in geometric function theory. Its generalization to higher dimensional  domains is known as $p$-harmonic radius introduced  by Levitski\u{i} in \cite{Lev}. For $p=2$ we deal with  harmonic radius which has various applications to partial differential equations (see, for example, \cite{Ban}). The applications of the extremal decomposition problems to analytic functions are numerous including distortion theorem, coefficient inequalities,   polynomial  inequalities and other similar problems.
Therefore it is desirable to
extend  extremal decomposition problems to higher-dimensional domains.  In this paper we are going to obtain  inequalities of a similar type as~\ref{lavr}) for the $p$-harmonic radius.

Note that, for planar domains, the inequality (\ref{lavr}) was generalized in several directions. The notion of Robin radius (the inner radius is a particular case of it) was introduced in papers by V.N. Dubinin  and his students (see \cite{Dub1} and references therein). Some extremal decomposition problems for Robin radius were considered there as well. The approach used there was essentially based on the fact that the sum of harmonic functions is again a harmonic function. In $\mathbb{R}^n$, the sum of $p$-harmonic functions is not a $p$-harmonic function in general. So it was possible only to obtain similar results for harmonic radius  in \cite{DP} and for 2-harmonic Robin radius in \cite{GKP}. In \cite{W}, W.~Wang considered the $n$-harmonic radius and extended the method of the harmonic transplantation to that of the $n$-harmonic transplantation.

G.V. Kuz'mina, A.Yu. Solynin, E.G. Emel'yanov, A. Vasiliev, Ch.~Pommerenke  used a different approach to study extremal decomposition problems, namely, their technique was based on the method of extremal metric (see for example \cite{Kuz,Emel,Vas,VP,Sol} and references therein).

In this paper, we also use the technique of moduli of curve families to prove theorems on extremal decomposition for the  $p$-harmonic radius. To formulate our results, we need some definitions and notation.

Throughout the paper  $\mathbb{R}^n$ denotes the $n$-dimensional
Euclidean space consisting of points $x=(x_1,\dots,\,x_n)$, $n\geq2$, $y=(y_1,\dots,\ y_n)\in \mathbb{R}^n$, $<x,y>=\sum_{i=1}^n x_iy_i$ is the inner product of $x$ and $y$,
and $|x|=\sqrt{x_1^2+\dots+x_n^2}$ is the length of a
vector $x\in\mathbb{R}^{n}$. For a ball and hypersphere, we introduce the following notation:
$B(a,r)=\{x\in\mathbb{R}^{n}:\,|a-x|<r\},$
$S(a,r)=\{x\in\mathbb{R}^{n}:\,|a-x|=r\}$, ~~~$a\in\mathbb{R}^{n}$, respectively.  For $\tau\in \mathbb{R}$ and $a\in \mathbb{R}^n \setminus \{0\}$,   we denote by
$L(a,\tau)=\{x\in  \mathbb{R}^{n}:<x,a>=\tau\}$ a hyperplane perpendicular to the vector $a.$
In what follows, we need the cylindrical coordinates $[\rho,\theta, x']$ of a point $x=(x_1,...,x_n) \in \mathbb{R}^n$, $n\geq 2,$ connected with the initial coordinates by the formulas: $x_1=\rho \cos \theta$, $x_2=\rho \sin \theta$, $x'=(x_3, x_4, \dots, x_n)$.

By the rotation by an angle $\beta$, we understand the transformation: $[\rho,\theta,x']\mapsto [\rho, \theta+\beta,x']$.

If $p>1$ then the {\it $p$-Laplacian} is defined as
$$
\Delta_p u=-{\rm div} (|\nabla u|^{p-2}\nabla u),
$$
on $\mathbb{R}^n$ for $u\in C^2(\mathbb{R}^n)$. For the potential theory for the $p$-Laplacian,  we refer to~\cite{HKM}
 and references therein. Let $D$ be a domain in $\mathbb{R}^n$, $x_0\in D$ ($x_0\neq \infty$), $\delta(x_0)$ be Dirac delta measure or function at the point $x_0$, $\omega_n$ be the volume of the $n$-dimensional unit ball in $\mathbb{R}^n$. It is known that, in the domain $D$ with a regular boundary, there exists a generalized solution $u_D(x,x_0)\in C^{1,\alpha} (D\setminus \{x_0\})$, $\alpha>0$, of the following Dirichlet problem
$$
\left\{ \begin{array}{l}
-\Delta_p u =n \omega_n \delta(x_0)\\
u=0 \ \ \ \text{on} \ \partial D.
\end{array}\right.
$$
The function $u_D(x,x_0)$ is called {\it $p$-harmonic Green function} of the domain $D$ with a pole at the point $x_0$.

If  we introduce the notation
$$
\mu_p(t)=\begin{cases}
-\log(t),\ p=n,\\ \frac{1}{\gamma}t^{-\gamma}, \ \text{where} \ \gamma=\frac{n-p}{p-1}, \ p\ne n,
\end{cases}
$$
for $t>0$ then we get from the results of \cite{KV} that the difference
$$
h_p(x,x_0)=u_D(x,x_0)-\mu_p(|x-x_0|)
$$
belongs to the class $L^{\infty}(D)$. The quantity $R_p(x_0,D)\geq 0$, such that

$$
\lim_{x\to x_0} h_p(x,x_0)=-\mu_p(R_p(x_0,D)),
$$
is called the {\it inner $p$-harmonic radius} of the domain $D$ at the point $x_0$   \cite{Lev,KV}. For a domain $D$ with not smooth boundary by the {\it inner $p$-harmonic radius} at a point $x_0\in D$, we will call the  quantity
$$
R_p(x_0,D) =\sup R_p(x_0,D'),
$$
where the supremum is taken over all domains with smooth boundaries and such that $D'\subset D$. In what follows, we will call the quantity $R_p(x_0,D)$ simply {\it $p$-harmonic radius} if $p\ne 2$ and {\it harmonic radius} if $p=2$.

Non-linearity of $p$-harmonic functions and the fact that  the set of conformal mappings in spaces of dimension greater than 2 is much more restricted than in the planar case make  calculations of $p$-harmonic radii of extremal domains complicated. In the case when $p=n$, we may apply M\"{o}bius transformations. It is easy to see that the mapping
$$
f(x)=-\frac{a}{|a|}+\frac{2|a|(x+a)}{|x+a|^2}
$$
maps a point $a\in \mathbb{R}^n$ to the origin and the hyperplane $L(a,0)$ onto the hypershpere $S(0,1)$. According to  \cite[formula (2.21)]{W} we get
\begin{equation}\label{rad1}
R_n(a,B^*)=R_n(0,B(0,1))/|f'(a)|=\left|a-\tilde{a}\right|,
\end{equation}
where  $|f'(a)| := | \det Df(a)|^{1/n},$ $B^*$ is the half-space containing the point $a$ and with the boundary $L(a,0)$. By $\tilde{a}$ we denote a point symmetric to $a$ with respect to the mentioned hyperplane $\partial B^*.$

Note that if a function $y = f(x)$  conformally maps a domain $B\subset \mathbb{R}^n$  onto a domain $\tilde B\subset \mathbb{R}^n$
and $y_0 = f(x_0),$ $x_0\in B,$ then we have \cite[p. 196]{Ban}
\begin{equation}\label{harm}
R_2(\tilde B,y_0) =|f'(x_0)| R_2(B,x_0).
\end{equation}
 Using the symmetry principle for harmonic
functions, it is not difficult to see \cite{DP} that the harmonic radius of  the dihedral angle $B_{2k}^*=\left\{x=[\rho,\theta,x']:|\theta|<\frac{\pi}{2k}\right\}$ at the point $x_0=[t,0,0]$ is
\begin{equation}\label{rad2}
R_2(B^*_{2k},x_0)=\left(\sum\limits_{l=1}^{2k-1}(-1)^{l-1}|x_0-x_l|^{2-n}\right)^{\frac{1}{2-n}},
\end{equation}
where $x_l=[t,\pi l/k, 0], \ l=1,\ldots, 2k-1, \ k=1,2,\ldots.$

Other special cases of calculation of p-harmonic radii that we do not use here can be found in the following papers \cite{Ban}, \cite{W}.

\section{Statements of main results}

If we consider the function $\mu_2(t)=-\log(t)$ as the fundamental solution of the Laplace equation then the inequality (\ref{lavr}) can be written in the following form
$$
\log r(a_1,D_1) + \log r(a_2,D_2) \leq 2 |a_1-a_2|,
$$
or equivalently
\begin{equation}
\mu_2(r(a_1,D_1))+\mu_2(r(a_2,D_2))\geq \mu_2(r(a_1,D_1^*))+\mu_2(r(a_2,D_2^*)),
\end{equation}
where $D_1^*$ and $D_2^*$ are half-planes with a common boundary $L^*$ such that the points $a_1$, $a_2$ are symmetric with respect to $L^*$. As a corollary of the  following theorem  we show that Lavrentiev's inequality remains true for $p$-harmonic radii of non-overlapping domains in the Eucledean space.

\begin{thm}\label{lavr1}
Let $G$ be a domain symmetric with respect to a hyperplane $L$, $a_1\in G$, $a_2\in G$, and let points $a_1$, $a_2$ $(a_1\ne a_2)$ be symmetric with respect to $L$ as well. Then for any non-overlapping domains $D_1\subset G$, $D_2\subset G$ such that $a_1\in D_1$, $a_2\in D_2$, we have
$$
\mu_p(R_p(a_1,D_1))+\mu_p(R_p(a_2,D_2))
$$
$$
\geq \mu_p(R_p(a_1,D_1^*))+\mu_p(R_p(a_2,D_2^*)),
$$
where
$D_1^*$ and $D_2^*$ are domains
obtained by division of the G   by hyperplane L, i.e.  $D_1^*$ and $D_2^*$ are non-overlapping   symmetric   to each other with respect to $L$ domains and such that $D_1^*\cup D_2^*=G\setminus L$.
\end{thm}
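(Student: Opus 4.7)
The strategy is to recast the inequality for $p$-harmonic radii as a comparison of $p$-moduli of naturally associated curve families in $G$, derive that comparison by a dissymmetrization rearrangement with respect to $L$, and then pass to a limit.

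For each small $r>0$ and each $i\in\{1,2\}$, I would introduce the curve family $\Gamma_i(r)$ of locally rectifiable paths in $D_i\setminus\overline{B(a_i,r)}$ joining $\partial B(a_i,r)$ to $\partial D_i$; its $p$-modulus equals the $p$-capacity of the ring condenser $(\overline{B(a_i,r)},D_i)$. Comparing $u_{D_i}$ with the explicit radial capacitary potential of the ring $B(a_i,R_p(a_i,D_i))\setminus\overline{B(a_i,r)}$ (which yields $\mathrm{cap}_p=n\omega_n[\mu_p(r)-\mu_p(R)]^{1-p}$ in the model case, as in \cite{KV}), one gets the asymptotic
\[
\mu_p(R_p(a_i,D_i))=\mu_p(r)-(n\omega_n)^{1/(p-1)}M_p(\Gamma_i(r))^{-1/(p-1)}+o(1)\qquad(r\to 0),
\]
and the analogous relation with starred quantities. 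For $D_i$ without smooth boundary I would use the supremum definition of $R_p$ to reduce to the smooth case through an exhaustion. Since $\mu_p(r)$ cancels across the two sides of the desired inequality, the theorem reduces to proving, for all sufficiently small $r$,
\[
M_p(\Gamma_1(r))^{-1/(p-1)}+M_p(\Gamma_2(r))^{-1/(p-1)}\le M_p(\Gamma_1^*(r))^{-1/(p-1)}+M_p(\Gamma_2^*(r))^{-1/(p-1)}.
\]

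The core of the argument is this modulus inequality, obtained by dissymmetrization with respect to $L$. Given an admissible metric for the symmetric family $\Gamma_1^*(r)\cup\Gamma_2^*(r)$, I would redistribute its mass across $L$ via the reflection $\sigma$ to produce an admissible metric for $\Gamma_1(r)\cup\Gamma_2(r)$: a point $x\in D_i\setminus D_i^*$ inherits its density from $\sigma(x)\in D_i^*$, the non-overlap of $D_1,D_2$ ensuring the assignment is well-defined. Checking admissibility amounts to observing that each path in $\Gamma_i(r)$ can be partitioned by its $L$-crossings into arcs which, after reflecting the ones lying in the ``wrong'' half-space, form a concatenation of arcs from the symmetric configuration; admissibility of the original metric there transfers to admissibility of the transported one. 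A slabwise $L^p$-estimate (Dubinin's dissymmetrization inequality, adapted to $n$ dimensions) controls the $p$-integrals and yields the needed inequality on combined moduli. Substituting into the asymptotic formula and letting $r\to 0$ finishes the proof.

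The main obstacle is executing the dissymmetrization step rigorously in this $n$-dimensional, $p$-Laplacian setting. The planar harmonic analogue in \cite{DP} exploits additivity of harmonic Green functions, which is unavailable for $p\ne 2$; here one must work entirely at the level of admissible metrics, and verify both that the transported metric remains admissible along every curve of $\Gamma_i(r)$ and that the resulting $p$-integrals obey the correct direction of the Minkowski-type inequality. Subsidiary technical points are the uniform control of the $o(1)$ remainder as $r\to 0$, which follows from the $C^{1,\alpha}$ regularity of $u_D$ away from the pole, and the exhaustion argument that transfers the result from smooth subdomains $D'\subset D_i$ to arbitrary $D_i$.
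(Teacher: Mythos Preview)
Your reduction to a modulus inequality via the asymptotic formula (\ref{radmod}) is exactly right and matches the paper. The divergence is in how you propose to prove the modulus inequality
\[
M_p(\Gamma_1(r))^{1/(1-p)}+M_p(\Gamma_2(r))^{1/(1-p)}\le 2\,M_p(\Gamma_1^*(r))^{1/(1-p)}.
\]
You frame this as a dissymmetrization of admissible metrics and flag it as the main obstacle; the paper sidesteps that obstacle entirely. It works on the curve side, not the metric side: set $\Gamma^*=\{\gamma\cup\sigma(\gamma):\gamma\in\Gamma_1^*(r)\}$, the family of curves in $D_1^*$ together with their reflections across $L$. Two elementary observations finish the job. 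First, Lemma~\ref{refl} with $m=1$ gives $M_p(\Gamma^*)=2^{1-p}M_p(\Gamma_1^*(r))$. Second, every $\tilde\gamma\in\Gamma^*$ contains a subcurve in $\Gamma(r,a_i,D_i)$ for each $i$: either $\gamma$ ends on $L$, so $\tilde\gamma$ is a continuous path from $S(a_1,r)$ to $S(a_2,r)$ inside $G$ and must cross $\partial D_i$ (since $D_1,D_2$ are disjoint); or $\gamma$ ends on $\partial G$, and then already $\gamma$ (resp.\ $\sigma(\gamma)$) meets $\partial D_i$. Thus $\Gamma^*$ is longer than each $\Gamma_i(r)$, the two families are separated, and property~5 of the $p$-modulus yields the inequality directly. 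No dissymmetrization machinery, no Minkowski-type estimate on transported metrics, no exhaustion argument is needed.

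Your proposed metric-transport argument also has a structural mismatch: producing from a metric admissible for $\Gamma_1^*\cup\Gamma_2^*$ one admissible for $\Gamma_1\cup\Gamma_2$ would compare $M_p$ of unions, whereas the target is a comparison of $(1-p)^{-1}$-power sums; these are not the same thing, and you do not explain how to bridge them. (Also, for $x\in D_i\setminus D_i^*$ one has $\sigma(x)\in D_{3-i}^*$, not $D_i^*$.) The ``longer than'' route makes this issue disappear, because property~5 is precisely the statement that relates a single family to a power-sum of separated minorants.
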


\begin{cor}
Let $a_1$, $a_2$ be arbitrary points in $\mathbb{R}^n$, $D_1$, $D_2$ be non-overlapping domains in $\mathbb{R}^n$, $a_i\in D_i$, $i=1,2$. Then
$$
\mu_p(R_p(a_1,D_1))+\mu_p(R_p(a_2,D_2))
$$
$$
\geq \mu_p(R_p(a_1,D_1^*))+\mu_p(R_p(a_2,D_2^*))=2\mu_p(R_p(a_1,D_1^*)),
$$
where $D_1^*$ and $D_2^*$ are half-spaces with common boundary $L^*$ such that the points $a_1$ and $a_2$ are symmetric with respect to $L^*$. In particular, for $p=n$  by~$(\ref{rad1})$ we obtain Lavrentiev's inequality
$$
R_n(a_1,D_1)\cdot R_n(a_2,D_2)\leq |a_1-a_2|^2.
$$
\end{cor}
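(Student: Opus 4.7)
The plan is to translate the inequality for $p$-harmonic radii into a comparison of $p$-moduli of small condensers at $a_1$ and $a_2$, and then to establish the latter by a dissymmetrization argument with respect to the hyperplane $L$, in the spirit of Dubinin's work cited in the introduction. For $\varepsilon > 0$ so small that $\overline{B(a_i,\varepsilon)} \subset D_i \cap D_i^*$, let $\Gamma_{\varepsilon,i}$ and $\Gamma_{\varepsilon,i}^*$ denote the families of curves joining $S(a_i,\varepsilon)$ to $\partial D_i$ inside $D_i \setminus \overline{B(a_i,\varepsilon)}$ and to $\partial D_i^*$ inside $D_i^* \setminus \overline{B(a_i,\varepsilon)}$, respectively. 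Using the defining expansion $u_D(x,x_0) = \mu_p(|x-x_0|) - \mu_p(R_p(x_0,D)) + o(1)$ near $x_0$, the $(p-1)$-homogeneity of $\Delta_p$ (so that a suitable constant multiple of $u_D$ solves the $p$-capacity problem for $(\overline{B(x_0,\varepsilon)}, D)$), and Green's formula on the punctured domain, one obtains
\[
\mu_p(R_p(a_i, D_i)) = \mu_p(\varepsilon) - (n\omega_n)^{1/(p-1)}\, M_p(\Gamma_{\varepsilon,i})^{-1/(p-1)} + o(1),\qquad \varepsilon\to 0,
\]
and the analogous identity with $D_i$ replaced by $D_i^*$ and $\Gamma_{\varepsilon,i}$ by $\Gamma_{\varepsilon,i}^*$. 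Summing over $i=1,2$, the theorem reduces to the modulus inequality
\[
M_p(\Gamma_{\varepsilon,1})^{-1/(p-1)} + M_p(\Gamma_{\varepsilon,2})^{-1/(p-1)} \leq M_p(\Gamma_{\varepsilon,1}^*)^{-1/(p-1)} + M_p(\Gamma_{\varepsilon,2}^*)^{-1/(p-1)}
\]
for all sufficiently small $\varepsilon$.

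To prove the modulus inequality I would apply dissymmetrization to exploit the reflection symmetry of $G$ and of the pair $\{a_1,a_2\}$. Denote by $\sigma$ the reflection across $L$ and by $H_i$ the open half-space containing $a_i$; by symmetry, $M_p(\Gamma_{\varepsilon,1}^*) = M_p(\Gamma_{\varepsilon,2}^*)$, and admissible metrics transport between $D_1^*$ and $D_2^*$ via $\sigma$. Starting from a nearly extremal admissible metric $\rho^*$ for $\Gamma_{\varepsilon,1}^*$ on $D_1^*$ and its reflection $\rho^*\circ\sigma$ on $D_2^*$, the dissymmetrization reshuffles these metrics across $L$ to produce admissible candidates on $(D_1, D_2)$, combining pieces that originally live on the two halves $H_1, H_2$ into metrics on the (possibly asymmetric) domains $D_i$. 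The non-overlapping condition $D_1 \cap D_2 = \emptyset$ together with $\sigma(G) = G$ is used to rule out double counting of mass, and a Minkowski-type inequality then controls the nonlinear functional $M_p^{-1/(p-1)}$.

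Passing to the limit $\varepsilon \to 0$ in the modulus inequality and substituting the asymptotic of Step 1 yields the statement of the theorem. The principal obstacle is the dissymmetrization in Step 2: in contrast to the planar $p=2$ case, where the comparison is essentially linear in the Dirichlet energy, the nonlinear exponent $-1/(p-1)$ requires a Minkowski-type combination of $p$-energies on the two sides of $L$; moreover, because $D_1, D_2$ themselves need not be symmetric about $L$, the transport of admissible metrics must rely on the joint symmetry of $G$ and the reflection $a_1 \mapsto a_2$, not on any symmetry of the $D_i$ individually. Verifying admissibility of the constructed metrics for curves that cross $L$ repeatedly, and that their boundary endpoints land where they should, is the most delicate technical point and is precisely where the curve-family language together with Dubinin-type dissymmetrization pays off.
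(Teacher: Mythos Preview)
Your reduction in Step~1 to the modulus inequality via the asymptotic
\[
\mu_p(R_p(a_i,D_i))=\mu_p(\varepsilon)-\lambda_n\,M_p(\Gamma_{\varepsilon,i})^{1/(1-p)}+o(1)
\]
is exactly how the paper proceeds (this is formula~(\ref{radmod})). The divergence is in Step~2. The paper does \emph{not} use dissymmetrization here, nor does it build admissible metrics and invoke a Minkowski inequality. Instead it works entirely on the curve-family side: starting from $\Gamma_1:=\Gamma(\varepsilon,a_1,D_1^*)$, it forms the reflected family $\Gamma^*=\{\gamma\cup\sigma(\gamma):\gamma\in\Gamma_1\}$, observes by Lemma~\ref{refl} (with $m=1$) that $M_p(\Gamma^*)=2^{1-p}M_p(\Gamma_1)$, and then checks by an elementary case analysis that every $\tilde\gamma\in\Gamma^*$ contains a subcurve in $\Gamma(\varepsilon,a_1,D_1)$ and one in $\Gamma(\varepsilon,a_2,D_2)$. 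Since the latter two families are separated (as $D_1\cap D_2=\emptyset$), property~5 of $p$-moduli gives
\[
2\,M_p(\Gamma_1)^{1/(1-p)}=M_p(\Gamma^*)^{1/(1-p)}\ge M_p(\Gamma(\varepsilon,a_1,D_1))^{1/(1-p)}+M_p(\Gamma(\varepsilon,a_2,D_2))^{1/(1-p)},
\]
which is precisely your target inequality, with no metric construction or Minkowski step needed. The ``longer than'' relation plus property~5 already encodes the nonlinear $1/(1-p)$ combination you were worried about.

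Two further remarks. First, dissymmetrization in this paper (Lemmas~\ref{Dub} and~\ref{dis}) is a rotation-based rearrangement used for $m\ge2$ poles on a circle (Theorem~\ref{dis1}); for two points already symmetric about $L$ it is superfluous, and invoking it here obscures rather than helps. Second, the Corollary itself requires no separate proof: it is Theorem~\ref{lavr1} with $G=\mathbb{R}^n$, and the Lavrentiev form for $p=n$ follows by substituting $R_n(a_i,D_i^*)=|a_1-a_2|$ from~(\ref{rad1}) and using $\mu_n=-\log$.
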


Theorem \ref{lavr1} also allows us to extend well-known  Kufarev's inequality concerning the product of inner radii of subdomains of the unit disk \cite[Section 6]{Dub1} to the case of $n$-harmonic radius. Let $a_1$, $a_2$ be arbitrary points of the ball $B(0,1)$, $a_1\ne 0$. The inversion $y=f_1(x)=a+r^2(x-a)/|x-a|^2$ with parameters $a=a_1/|a_1|^2$, $r^2=|a|^2-1$, preserves the ball $B(0,1)$ and maps the point $a_1$ to the origin. The second inversion $z=f_2(y)=b+\rho^2(y-b)/|y-b|^2$ with parameters
$$
b=\frac{1+\sqrt{1-|f_1(a_2)|^2}}{|f_1(a_2)|^2}f_1(a_2), \ \ \rho^2=|b|^2-1,
$$
preserves $B(0,1)$ and maps the points $0$ and $f_1(a_2)$ to a pair of symmetric points with respect to the origin. Therefore, the composition
\begin{equation}\label{phi}
\psi_{a_1,a_2}(x)=f_2(f_1(x))
\end{equation}
preserves the
unit ball and maps the points $a_1$ and $a_2$ to some symmetric points $c$ and $-c$
respectively. If $a_1 = 0$ then $f_1(x) = x$. Denote by $C(a_1, a_2)$ the image of
hyperplane $< c, x >= 0$ under the mapping $\psi_{a_1,a_2}^{-1}$. It is
clear that $C(a_1, a_2)$ is a  "hypersphere" (either a hyperplane or hypersphere) which is orthogonal to
the sphere $S(0, 1)$ and lies between the points $a_1$ and $a_2$.

\begin{cor}\label{cor}
Let $a_1$, $a_2$ be arbitrary points in $B(0,1)$, $D_1$, $D_2$ be non-overlapping domains in $B(0,1)$, $a_i\in D_i$, $i=1,2$. Then
$$
R_n(a_1,D_1)R_n(a_2,D_2)\leq R_n(a_1,D_1^*)R_n(a_2,D_2^*))
$$
where $D_1^*$ and $D_2^*$ are domains
obtained by division of the ball $B(0,1)$   by "hypersphere" $C(a_1,a_2)$. \end{cor}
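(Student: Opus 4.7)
The plan is to reduce Corollary \ref{cor} to Theorem \ref{lavr1} via the explicit Möbius transformation $\psi = \psi_{a_1,a_2}$ defined in (\ref{phi}). This map preserves $B(0,1)$, sends $a_1$ and $a_2$ to the symmetric pair $c$ and $-c$, and by construction carries the ``hypersphere'' $C(a_1,a_2)$ onto the hyperplane $L = \{x : \langle c,x\rangle = 0\}$. Write $\widetilde{D}_i = \psi(D_i)$; then $\widetilde{D}_1$ and $\widetilde{D}_2$ are non-overlapping subdomains of $B(0,1)$ with $c \in \widetilde{D}_1$, $-c \in \widetilde{D}_2$, while the pair of half-balls $\widetilde{D}_i^*$ cut out of $B(0,1)$ by $L$ is exactly $\psi(D_i^*)$. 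The configuration $(B(0,1), c, -c, L)$ satisfies the hypotheses of Theorem \ref{lavr1}, so applying it with $p = n$ gives
\begin{equation*}
\mu_n\!\bigl(R_n(c,\widetilde{D}_1)\bigr) + \mu_n\!\bigl(R_n(-c,\widetilde{D}_2)\bigr) \ \geq\ \mu_n\!\bigl(R_n(c,\widetilde{D}_1^*)\bigr) + \mu_n\!\bigl(R_n(-c,\widetilde{D}_2^*)\bigr).
\end{equation*}

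Next I would invoke the conformal covariance of the $n$-harmonic radius under Möbius transformations. Precisely, if $\psi$ is Möbius and $y_0 = \psi(x_0)$, then $R_n(\psi(D), y_0) = |\psi'(x_0)|\,R_n(D, x_0)$; this is the $n$-dimensional analogue of (\ref{harm}) and is exactly the identity that underlies formula (\ref{rad1}). Applying it to $\psi = \psi_{a_1,a_2}$ at the points $a_1$ and $a_2$, with the two choices of domain $D_i$ and $D_i^*$, yields
\begin{equation*}
R_n(c,\widetilde{D}_i) = |\psi'(a_i)|\, R_n(a_i, D_i), \qquad R_n(\pm c,\widetilde{D}_i^*) = |\psi'(a_i)|\, R_n(a_i, D_i^*).
\end{equation*}

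Since $\mu_n(t) = -\log t$ turns multiplication into addition, the factors $-\log|\psi'(a_1)| - \log|\psi'(a_2)|$ appear on both sides of the Theorem \ref{lavr1} inequality and cancel. What remains is
\begin{equation*}
\mu_n\!\bigl(R_n(a_1,D_1)\bigr) + \mu_n\!\bigl(R_n(a_2,D_2)\bigr) \ \geq\ \mu_n\!\bigl(R_n(a_1,D_1^*)\bigr) + \mu_n\!\bigl(R_n(a_2,D_2^*)\bigr),
\end{equation*}
which is equivalent, after exponentiating and reversing the resulting inequality, to the desired conclusion $R_n(a_1,D_1)R_n(a_2,D_2) \leq R_n(a_1,D_1^*)R_n(a_2,D_2^*)$.

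The main obstacle I expect is the conformal covariance step: one must be sure that the identity $R_n(\psi(D), \psi(x_0)) = |\psi'(x_0)|\,R_n(D,x_0)$ really holds for Möbius $\psi$ in dimension $n$, not only for $n = 2$. This rests on the conformal invariance of the $n$-Laplace equation under Möbius maps and the resulting transformation law for the $n$-harmonic Green function; once recorded (citing \cite{W} or the computation leading to (\ref{rad1})), the whole argument is a clean transport of Theorem \ref{lavr1} through $\psi_{a_1,a_2}$. Verifying that $\psi_{a_1,a_2}$ is well defined when $a_1 \neq 0$ and that the degenerate case $a_1 = 0$ (where $f_1$ is the identity) still fits the same scheme is a routine check.
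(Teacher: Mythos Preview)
Your proposal is correct and follows precisely the approach the paper intends: the discussion preceding Corollary~\ref{cor} constructs $\psi_{a_1,a_2}$ exactly so that one can transport the configuration to the symmetric situation $(B(0,1),c,-c,L)$, apply Theorem~\ref{lavr1} with $p=n$, and then use the Möbius covariance of $R_n$ from \cite[formula (2.21)]{W} (as in (\ref{rad1})) together with $\mu_n=-\log$ to cancel the Jacobian factors. There is no alternative argument in the paper; your write-up is essentially the proof the authors leave implicit.
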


An analogue of Kufarev's inequality for $p=2$, $n\geq 3$ was obtained in \cite[Theorem~3]{DP}. According to this result, the quantity
$$
-R_2(a_1,D_1)^{2-n}-R_2(a_2,D_2)^{2-n}
$$
attains its maximum when $D_1$ and $D_2$ are subdomains of the unit ball that are described by the following inequalities
$$
D_l=\{x\in B(0,1):\sum_{k=1}^2 (-1)^{k+l}\left(|x-a_k|^{2-n}-||a_k|x-a_k/|a_k||^{2-n}\right)>0\}, \ \ l=1,2.
$$

Figure 1  depicts the section of the domains $D_l$, $l=1,2$,  by the plane $(x_1,x_2,0,0)$ for the case of $n=4$, $p=2$ with $a_1=(1/2,0,0,0)$ and $a_2=(1/3, 0,0,0)$. Also this figure shows the extremal decomposition from Corollary~\ref{cor} for $p=n=4$. The problem of obtaining an analogue of Kufarev's theorem for arbitrary $p>1$ is interesting and still open.
\begin{figure}
\centering
\includegraphics[width=0.6\textwidth]{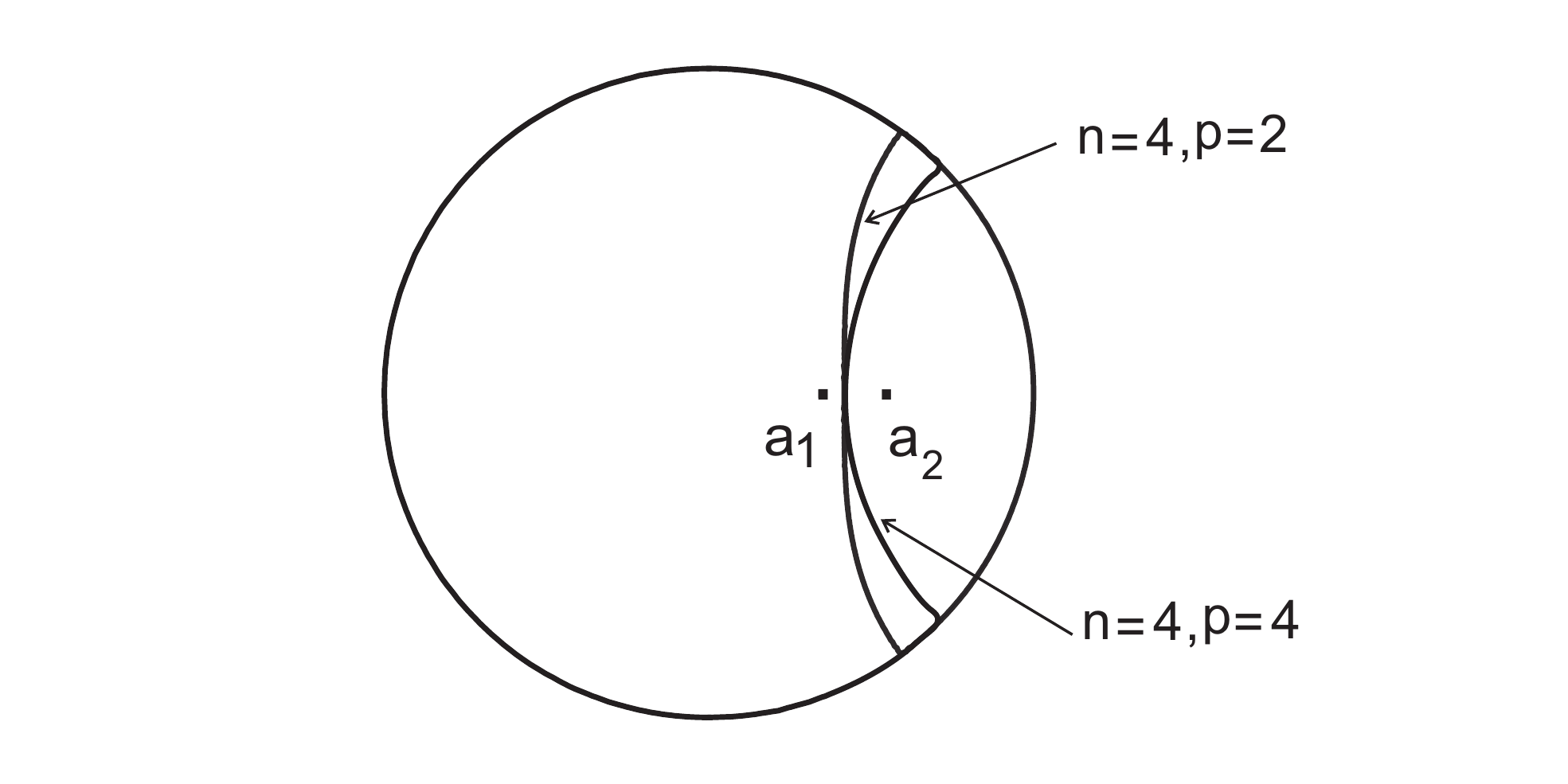}
\caption{Example of exteremal decompositions}
\end{figure}

By applying (\ref{harm}) to the harmonic radius, we conclude that the extreme configuration of Corollary \ref{cor} is preserved for the
quantity
$$\left(|\psi'_{a_1,a_2}(a_1)|R_2(a_1,D_1)\right)^{2-n}+\left(|\psi'_{a_1,a_2}(a_2)|R_2(a_2,D_2)\right)^{2-n},$$ where
$\psi_{a_1,a_2}(x)$ is defined by (\ref{phi}).
\begin{cor}\label{cor2}
Let $a_1$, $a_2$ be arbitrary points in $B(0,1)$, $D_1$, $D_2$ be non-overlapping domains in $B(0,1)$, $a_i\in D_i$, $i=1,2$. Then
$$
\left(|\psi'_{a_1,a_2}(a_1)|R_2(a_1,D_1)\right)^{2-n}+\left(|\psi'_{a_1,a_2}(a_2)|R_2(a_2,D_2)\right)^{2-n}\geq$$
$$
\left(|\psi'_{a_1,a_2}(a_1)|R_2(a_1,D_1^*)\right)^{2-n}+\left(|\psi'_{a_1,a_2}(a_2)|R_2(a_2,D_2^*)\right)^{2-n},
$$
where $D_1^*$ and $D_2^*$ are domains
obtained by division of the ball $B(0,1)$   by "hypersphere" $C(a_1,a_2)$. \end{cor}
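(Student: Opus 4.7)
The plan is to reduce Corollary~\ref{cor2} to Theorem~\ref{lavr1} (with $p=2$) by pushing the configuration forward via the Möbius self-map $\psi=\psi_{a_1,a_2}$ of $B(0,1)$, and then pulling the resulting inequality back using the conformal transformation law~(\ref{harm}) for the harmonic radius.

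First, I would record the features of $\psi$: being the composition of two inversions that preserve $B(0,1)$, it is a conformal self-map of $B(0,1)$ with $\psi(a_1)=c$, $\psi(a_2)=-c$, and, by the very definition of $C(a_1,a_2)$, it satisfies $\psi(C(a_1,a_2))=L$, where $L:=\{x\in\mathbb{R}^n:\langle c,x\rangle=0\}$. Setting $\tilde D_i:=\psi(D_i)$ and $\tilde D_i^{*}:=\psi(D_i^{*})$, one obtains non-overlapping subdomains $\tilde D_1,\tilde D_2\subset B(0,1)$ with $\pm c\in\tilde D_i$, while $\tilde D_1^{*}$ and $\tilde D_2^{*}$ are precisely the two half-balls into which $L$ splits $B(0,1)$. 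The transformation rule (\ref{harm}) then yields
$$R_2(\pm c,\tilde D_i)=|\psi'_{a_1,a_2}(a_i)|\,R_2(a_i,D_i),\qquad R_2(\pm c,\tilde D_i^{*})=|\psi'_{a_1,a_2}(a_i)|\,R_2(a_i,D_i^{*}).$$

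Next, I would apply Theorem~\ref{lavr1} in the transferred situation. The ball $B(0,1)$ is symmetric with respect to $L$, and $c,-c$ are mutually symmetric with respect to $L$, so Theorem~\ref{lavr1} with $p=2$, $G=B(0,1)$ and the pair $\tilde D_1,\tilde D_2$ gives
$$\mu_2(R_2(c,\tilde D_1))+\mu_2(R_2(-c,\tilde D_2))\ \geq\ \mu_2(R_2(c,\tilde D_1^{*}))+\mu_2(R_2(-c,\tilde D_2^{*})).$$
For $n\geq 3$ and $p=2$ one has $\mu_2(t)=t^{2-n}/(n-2)$; multiplying through by $n-2>0$ and substituting the four transformation identities above converts the display precisely into the desired inequality. (For $n=2$ both sides reduce to $2$, and there is nothing to prove.)

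The main obstacle, if any, is purely bookkeeping: one must check that $\psi$ carries the extremal pair $(D_1^{*},D_2^{*})$ bijectively onto the two components of $B(0,1)\setminus L$. This follows from $\psi$ being a homeomorphism of $\overline{B(0,1)}$, the identity $\psi(C(a_1,a_2))=L$, and the fact that $\psi(a_i)=(-1)^{i-1}c$ lies in the intended component. No analytic input beyond Theorem~\ref{lavr1} and the conformal transformation formula~(\ref{harm}) is required, which is consistent with the author's remark that the extremal configuration of Corollary~\ref{cor} is simply transplanted by $\psi$.
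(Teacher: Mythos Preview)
Your proof is correct and follows precisely the route the paper indicates: push the configuration forward via $\psi_{a_1,a_2}$, apply Theorem~\ref{lavr1} with $p=2$ and $G=B(0,1)$ to the symmetric poles $\pm c$, then pull the resulting inequality back using the conformal transformation rule~(\ref{harm}). The paper does not spell out a separate proof of this corollary; it simply remarks, just before the statement, that ``by applying~(\ref{harm}) to the harmonic radius, we conclude that the extreme configuration of Corollary~\ref{cor} is preserved'' for the weighted quantity, and your write-up is exactly what that remark unpacks to.
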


In the following theorem we solve the problem on extremal decomposition of a ring or cylinder  with free poles belonging to a circle. For harmonic radius, this theorem is proved in \cite[Theorem 4]{DP}. Common features of the proof also remain, although we had to extend the technique of p-modules of curves families.
\begin{thm}\label{dis1}
  Let $G$ be either a ring $K(\rho_1,\rho_2)=\{x\in \mathbb{R}^n:\rho_1<|x|<\rho_2\}$ or a cylinder  $Z(\rho_1,\rho_2)=\{[\rho,\theta,x']\in \mathbb{R}^n:\rho_1<\rho<\rho_2\}$, $m\geq 1,$ $0\leq\rho_1\leq\rho_2\leq\infty.$ Then, for any points $a_l$ lying on the circle $O(\rho_0)=\{[\rho,\theta,x']: \rho=\rho_0\}$, $\rho_1<\rho_0<\rho_2$ and any non-overlapping domains $D_l$, $D_l\subset G$, $a_l\in D_l$, $l=0,1,\dots,m-1$, we have
  $$
  \sum_{l=0}^{m-1} \mu_p(R_p(a_l,D_l))\geq \sum_{l=0}^{m-1}\mu_p(R_p(a_l^*,D_l^*))=m\mu_p(R_p(a_0^*,D_0^*)).
  $$
Here
 $$
 a_l^*=\left[\rho_0,\frac{2\pi l}{m}, 0\right] \ \ \text{and} \ \
  D_l^*=G\cap \left\{[\rho,\theta, x']: \frac{\pi(2l-1)}{m}<\theta<\frac{\pi(2l+1)}{m}\right\}.
 $$
\end{thm}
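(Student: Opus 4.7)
My plan is to express $\mu_p(R_p(a_l,D_l))$ through the $p$-modulus of a curve family near each pole, and then establish the inequality by a rotational dissymmetrization of the combined curve family, exploiting the cyclic symmetry of $G$.

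\textit{Step 1 (modulus reformulation).} For each $l$ and small $r>0$, let $\Gamma_l(r)$ denote the family of curves in $D_l\setminus\overline{B}(a_l,r)$ joining $S(a_l,r)$ to $\partial D_l$, and let $\Gamma_l^*(r)$ be the analogous family for $(D_l^*,a_l^*)$. The $p$-harmonic Green function $u_{D_l}(\cdot,a_l)$ is nearly constant on $S(a_l,r)$ with value $\mu_p(r)-\mu_p(R_p(a_l,D_l))+o(1)$ as $r\to 0^+$, and the standard identity between the $p$-capacity of the condenser $(\overline{B}(a_l,r),D_l)$ and the $p$-modulus $M_p(\Gamma_l(r))$ yields
\[
\mu_p(R_p(a_l,D_l))=\mu_p(r)-\bigl(n\omega_n/M_p(\Gamma_l(r))\bigr)^{1/(p-1)}+o(1),
\]
and analogously for $(D_l^*,a_l^*)$. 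Summing over $l$, Theorem \ref{dis1} becomes equivalent, in the limit $r\to 0^+$, to the comparison
\[
\sum_{l=0}^{m-1}M_p(\Gamma_l(r))^{-1/(p-1)}\le m\,M_p(\Gamma_0^*(r))^{-1/(p-1)}+o(1).
\]

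\textit{Step 2 (rotational dissymmetrization).} The cyclic group $C_m$ of rotations $[\rho,\theta,x']\mapsto[\rho,\theta+2\pi k/m,x']$, $k=0,\dots,m-1$, preserves $G$. Following Dubinin's dissymmetrization, but now implemented at the level of admissible metrics for $p$-moduli rather than of harmonic functions, I would transform the combined family $\bigcup_l\Gamma_l(r)$ into a $C_m$-symmetric family $\bigcup_l\widetilde\Gamma_l(r)$, where each $\widetilde\Gamma_l(r)$ is a curve family in a subdomain $\widetilde D_l\subset G\cap\{[\rho,\theta,x']:\pi(2l-1)/m<\theta<\pi(2l+1)/m\}$ containing $a_l^*$, the $\widetilde D_l$'s being pairwise disjoint and mutually congruent under $C_m$. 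Since every elementary step is a Euclidean isometry about the $x'$-axis, any admissible density for the original family transports with unchanged $L^p$-norm of its gradient, producing the modulus inequality
\[
\sum_{l=0}^{m-1} M_p(\Gamma_l(r))^{-1/(p-1)} \le m\,M_p(\widetilde\Gamma_0(r))^{-1/(p-1)}.
\]
Finally, from the inclusion $\widetilde D_0\subset D_0^*$ and the monotonicity of $p$-capacity under enlarging the outer domain, one has $M_p(\widetilde\Gamma_0(r))\ge M_p(\Gamma_0^*(r))$, hence $M_p(\widetilde\Gamma_0(r))^{-1/(p-1)}\le M_p(\Gamma_0^*(r))^{-1/(p-1)}$, which closes the chain from Step 1.

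\textit{Main obstacle.} The central technical difficulty, absent in the $p=2$ case of \cite{DP}, is the extension of Dubinin's dissymmetrization from a Dirichlet-integral argument to the $p$-modulus of curve families. Because $p$-harmonic functions for $p\ne 2$ are not additive and because the convex function $t\mapsto t^{-1/(p-1)}$ prevents a naive aggregation of moduli, the modulus inequality in Step 2 must be obtained by rearranging admissible metrics directly, using the isometry invariance of the $p$-Dirichlet integral under rotations about the $x'$-axis as the only available conservation law. Once this $p$-modulus dissymmetrization lemma is in place, the limit $r\to 0^+$ in Step 1 yields the full inequality, and the explicit form $\sum_l\mu_p(R_p(a_l^*,D_l^*))=m\mu_p(R_p(a_0^*,D_0^*))$ on the right-hand side follows immediately from the congruence of the $D_l^*$'s under $C_m$.
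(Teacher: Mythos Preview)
Your Step~1 is correct and matches the paper's use of formula~(\ref{radmod}). The gap is in Step~2, and it is structural rather than merely technical.

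Dubinin's dissymmetrization goes from a $\Phi$-symmetric object to an asymmetric one, not the reverse. You propose to ``symmetrize'' the arbitrary families $\Gamma_l(r)$ into families sitting in subdomains $\widetilde D_l\subset D_l^*$; but there is no mechanism that takes arbitrary non-overlapping domains $D_l$ and produces, by rotations of pieces, a domain contained in the fixed sector $D_l^*$ --- controlling the overlap of rotated pieces is precisely the issue, and your outline offers no such control. The inclusion $\widetilde D_0\subset D_0^*$ is therefore unjustified, and with it the final monotonicity step collapses. Separately, the displayed modulus inequality in Step~2 is asserted, not proved: transporting an admissible density under isometries yields an admissible density for the transported family, but this by itself does not give the sum-of-powers comparison you write (and the phrase ``$L^p$-norm of its gradient'' suggests a conflation of admissible densities for moduli with competitors for capacity).

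The paper's argument runs in the opposite direction. One starts on the extremal side: take the half-sector family $\Gamma_0^*$ of curves in $N_0^*$ joining $S(a_0^*,t)\cap N_0^*$ to $\partial D_0^*$, reflect it $2m$ times to obtain a $\Phi$-symmetric family $\Gamma^*$ with $M_p(\Gamma^*)=(2m)^{1-p}M_p(\Gamma_0^*)$ (Lemma~\ref{refl}), and then apply dissymmetrization, which \emph{preserves} the modulus exactly (Lemma~\ref{dis}). The link to the arbitrary configuration is a ``longer than'' argument: each curve of $\mathrm{Dis}\,\Gamma^*$ is shown to contain a subcurve belonging to each of the $2m$ separated half-families $\Gamma_l^{+}$, $\Gamma_l^{-}$ (curves in the angular regions $T_l^{+}$, $T_l^{-}$ joining the half-spheres $S^{+}(a_l,t)$, $S^{-}(a_l,t)$ to $\partial D_l$). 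Property~5 of the $p$-modulus then gives $2m\,M_p(\Gamma_0^*)^{1/(1-p)}\ge\sum_l\bigl(M_p(\Gamma_l^+)^{1/(1-p)}+M_p(\Gamma_l^-)^{1/(1-p)}\bigr)$, and the power-mean inequality together with $M_p(\Gamma(t,a_l,D_l))\ge M_p(\Gamma_l^+)+M_p(\Gamma_l^-)$ (property~4) collapses each pair to $M_p(\Gamma(t,a_l,D_l))^{1/(1-p)}$. The construction of $\Gamma^*$, the verification that $\mathrm{Dis}\,\Gamma^*$ is longer than each $\Gamma_l^{\pm}$, and the $+/-$ splitting are the ingredients your proposal is missing.
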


In particular, if $m=2k,$ $\rho_1=0,$ $\rho_2=\infty$ by (\ref{rad2}) we have the inequality \cite{DP}
$$\sum_{l=0}^{2k-1} R_2(a_l, D_l)^{2-n}\geq m \sum_{l=1}^{2k-1}
(-1)^{l+1}|a_{0}^*-a_l^*|^{2-n}.$$
In the case  $m=2,$ $\rho_1=0,$ $\rho_2=\infty$ we obtain  Theorem \ref{lavr1} with the additional condition  that the midpoint of the segment $[a_1, a_2]$ does not belong to the union $D_1\cup D_2.$

Note that Theorem 2 is formulated for a ring or cylinder but the proof presented below can be easily extended to any domain $G$ invariant under arbitrary rotations.

\section{Background results}

Here  we mean by a "curve"  a Borel set $\gamma\in \mathbb{R}^n$  with $s(\gamma)>0,$   where $s(\gamma)$ is one-dimensional Hausdorf measure. We need it, for example, to apply later a dissymmetrization transformation that in a general case breaks a curve.  If we want to emphasize that a curve is understood in a standard sense as a homeomorphic image of a segment or circle  we will call it {\it continuous curve}.  Also, when we say that a curve joins two sets $A$ and $B$ in $G$ we mean that this curve is continuous and has a representation $\gamma: [a,b] \rightarrow \mathbb{R}^n$ such that one of the of the end-points $\gamma(a)$, $\gamma(b)$  belongs to $A$ and the other to $B$, and $\gamma(t)\in G$ for $a<t<b$.

 Let $\Gamma$ be a family of  curves in $\mathbb{R}^n.$  Then the {\it  $p$-modulus} of the curve family    is the following quantity
$$
M_p(\Gamma)=\inf \int_{\mathbb{R}^n}\rho^p dx,
$$
where the inf is taken over all Borel functions $\rho: \mathbb{R}^n\rightarrow[0,\infty]$, such that the inequality $\int_{\gamma}\rho d s\geq 1$ holds for every curve $\gamma\in \Gamma$. Functions $\rho$ that satisfy the mentioned above conditions are called {\it admissible} for the curve family $\Gamma$ and the set of all such functions  is denoted by ${\rm Adm} \Gamma$.  In the case of a family of continuous curves above the notion of the $p$-modulus coincides with the traditional notion of $p$-modulus of families of curves.

 B. Levitski\u{i}  in \cite[Theorem 1]{Lev} showed a connection of the  $p$-harmonic radius  with the $p$-capacity.
Taking into account the equality  the $p$-capacity between the $p$-modulus of a corresponding curve family (see \cite{Shlyk}) we can also define {\it $p$-harmonic radius}  with the help of the following identity
\begin{equation}\label{radmod}
-\mu_p(R_p(a,D))=\lambda_n M_p(t,a,D)^{\frac{1}{1-p}}-\mu_p(t)+o(1), \ \ t\rightarrow 0,
\end{equation}
where $M_p(t,a,D)$ is the $p$-modulus of the curve family $\Gamma(t,a,D)$, which consists of all curves joining the hypersphere $S(a,t)$ and $\partial D$ in $D$, $\lambda_n=(n\omega_n)^{\frac{1}{p-1}}$, $\omega_n$-is the volume of the ball $B(0,1)$.

List here some basic properties of the $p$-modulus (see for example \cite{zbMATH03130373}):
\smallskip

1) If $\Gamma_1\subset \Gamma_2$ then $M_p(\Gamma_1)\leq M_p(\Gamma_2)$.
\smallskip

2) $M_p\left(\cup_{i=1}^{\infty}\Gamma_i\right)\leq \sum_{i=1}^{\infty}M_p(\Gamma_i).$
\smallskip

3) If $\Gamma_2$ is {\it longer} than $\Gamma_1$ which means that each curve $\gamma\in\Gamma_2$ has a subcurve belonging to $\Gamma_1$, then $M_p(\Gamma_1)\geq M_p(\Gamma_2)$.
\smallskip

4) If $\Gamma_1, \Gamma_2,\dots$ are separated and $\Gamma_i$ is longer than $\Gamma$, $i=1,2,\dots$ then $M_p(\Gamma)\geq \sum_{i=1}^{\infty}M_p(\Gamma_i)$. (Curves families $\Gamma_1, \Gamma_2,\dots$ are called  {\it separated} if there exist disjoint Borel sets $E_i$ in $\mathbb{R}^n$ such that if  $\gamma\in\Gamma_i$  then $\int_{\gamma}\chi_i ds=0$, where $\chi_i$ is the characteristic function of $\mathbb{R}^n\setminus E_i$).
\smallskip

5) If $\Gamma_1, \Gamma_2,\dots$ are separated curves families and $\Gamma$ is longer than $\Gamma_i$, $i=1,2,\dots$, then $M_p(\Gamma)^{1/(1-p)}\geq \sum_{i=1}^{\infty}M_p(\Gamma_i)^{1/(1-p)}$.

\begin{lem}\label{szero} Let $L$  be an arbitrary hyperplane and $\Gamma$ consist of curves  $\gamma$   such that the intersection  $\gamma\cap L$ has  positive one-dimensional Hausdorf measure $\left(s(\gamma\cap L)>0\right)$. Then
$$
M_p(\Gamma)=0.
$$
\end{lem}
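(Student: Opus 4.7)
The plan is to exploit the measure-theoretic asymmetry between the hyperplane $L$ and the intersections $\gamma \cap L$: although each such intersection has positive one-dimensional Hausdorff measure, the hyperplane $L$ itself has $n$-dimensional Lebesgue measure zero. Thus a function concentrated on $L$ will integrate to $0$ in $\mathbb{R}^n$ while still producing positive line integrals along curves of $\Gamma$.

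To make this quantitative while staying inside the class of finite-valued Borel admissible functions, I would stratify $\Gamma$. Set
$$
\Gamma_k = \{\gamma \in \Gamma : s(\gamma \cap L) \geq 1/k\}, \qquad k = 1, 2, \dots,
$$
so that $\Gamma = \bigcup_{k=1}^{\infty} \Gamma_k$ by the hypothesis $s(\gamma \cap L) > 0$. For each fixed $k$, the Borel function $\rho_k = k \cdot \chi_L$ satisfies
$$
\int_\gamma \rho_k \, ds = k \cdot s(\gamma \cap L) \geq 1 \qquad \text{for every } \gamma \in \Gamma_k,
$$
so $\rho_k \in \mathrm{Adm}\,\Gamma_k$. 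Since $L$ is a hyperplane, its $n$-dimensional Lebesgue measure is $0$, and therefore
$$
\int_{\mathbb{R}^n} \rho_k^p \, dx = k^p \cdot m_n(L) = 0.
$$
This yields $M_p(\Gamma_k) = 0$ for every $k$.

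Finally, by countable subadditivity of $p$-modulus (property 2 in the list above),
$$
M_p(\Gamma) \leq \sum_{k=1}^{\infty} M_p(\Gamma_k) = 0,
$$
which gives $M_p(\Gamma) = 0$ as required. There is no real obstacle here; the only delicate point is to avoid using $\rho = \infty \cdot \chi_L$ directly, which is why I first slice $\Gamma$ into the pieces $\Gamma_k$ on which a finite multiple of $\chi_L$ already serves as an admissible density.
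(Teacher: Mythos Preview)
Your proof is correct and follows essentially the same route as the paper: stratify $\Gamma$ into the subfamilies $\Gamma_k=\{\gamma:s(\gamma\cap L)\ge 1/k\}$, use a constant multiple of $\chi_L$ as an admissible density on each $\Gamma_k$, observe that its $L^p$-integral over $\mathbb{R}^n$ vanishes, and conclude via countable subadditivity of the $p$-modulus. Your choice $\rho_k=k\,\chi_L$ is in fact the right constant (the paper writes $1/k$, which appears to be a slip), but the argument is otherwise identical.
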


\begin{proof}
Let $\Gamma_k=\{\gamma\in \Gamma:s(\gamma\cap L)>1/k,\ k\  \text{is\ positive \ integer}\}$. In this case,  $\Gamma=\cup_{k=1}^{\infty}\Gamma_k$. It is easy to verify that the function
$$
\rho(x)=\left\{\begin{array}{l}\  1/k,\ x\in\ L,\\ \
0,\ x\notin\ L,
\end{array}\right.
$$
is admissible for $\Gamma_k$ and $\int_{\mathbb{R}^n}\rho^p dx=0.$ Then by the definition of the $p$-modulus $M_p(\Gamma_k)= 0$. By property 2,  $M_p(\Gamma)=0$.  The lemma is proved.
\end{proof}
Now for  $m\geq 1$, we put
$$
N_k^*=\left\{[\rho,\theta,x']\in \mathbb{R}^n, \ \frac{\pi k}{m}\leq \theta \leq \frac{\pi (k+1)}{m} \right\}, \ k=0,1,\dots, 2m-1,
$$
and
$$
L_k^*=\left\{[\rho, \theta, x']\in \mathbb{R}^n:\theta=\frac{\pi k}{m}\right\}, \ \ k=0,\dots, 2m-1.
$$

We denote by $\Phi$ the group of symmetries in $\mathbb{R}^n$ consisting of the  superpositions
of the reflections in hyperplanes containing $L_l^*$, $l=0,\dots,2m-1$.

\begin{lem}\label{refl}
Let $m\geq 1,$ $\Gamma_0^*$ be a family of curves $\gamma_0^*$ lying in $N_0^*$  and let $\phi_k(x)$  denote the reflection in a hyperplane  containing $L_k^*$. Let  $\gamma_k^*=\phi_k(\gamma_{k-1}^*)$, $k=1,\dots, 2m-1$, and  $\gamma^*=\cup_{k=0}^{2m-1}\gamma_k^*$ be a curve symmetric with respect to the group $\Phi$ and consisting of $2m$ reflections of $\gamma_0^*$, $\Gamma^*$ be the family of curves $\gamma^*$. Then
$$
M_p(\Gamma^*)=(2m)^{1-p}M_p(\Gamma_0^*).
$$
\end{lem}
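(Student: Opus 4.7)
My plan is to verify the identity by establishing the matching upper and lower bounds separately, each obtained by transplanting an admissible density between the two families. Writing $\psi_k = \phi_k \circ \phi_{k-1} \circ \cdots \circ \phi_1$ (with $\psi_0 = \mathrm{id}$), each map $\psi_k\colon N_0^*\to N_k^*$ is a Euclidean isometry and $\gamma_k^* = \psi_k(\gamma_0^*)$. As a preliminary reduction, I would discard from $\Gamma_0^*$ the subfamily of curves with $s(\gamma_0^*\cap L_k^*)>0$ for some $k$; by Lemma~\ref{szero} together with subadditivity (property~2), this subfamily has zero $p$-modulus, so $M_p(\Gamma_0^*)$ is unchanged. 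After this reduction the pieces $\gamma_k^*$ overlap only on $s$-null sets, so $\int_{\gamma^*}\rho\,ds = \sum_{k=0}^{2m-1}\int_{\gamma_k^*}\rho\,ds$ for every Borel $\rho \geq 0$.

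For the upper bound $M_p(\Gamma^*)\leq(2m)^{1-p}M_p(\Gamma_0^*)$, I would fix $\rho_0\in{\rm Adm}\,\Gamma_0^*$; since every $\gamma_0^*\subset N_0^*$ we may assume $\rho_0$ is supported in $N_0^*$. Define
$$
\rho^*(x) \;=\; \frac{1}{2m}\,\rho_0\bigl(\psi_k^{-1}(x)\bigr)\quad\text{for } x\in N_k^*,\ k=0,1,\ldots,2m-1.
$$
Since each $\psi_k$ is an isometry, $\int_{\gamma^*}\rho^*\,ds = \sum_k(2m)^{-1}\int_{\gamma_0^*}\rho_0\,ds \geq 1$, so $\rho^*\in{\rm Adm}\,\Gamma^*$, while a change of variables in each sector yields $\int(\rho^*)^p\,dx = (2m)\cdot(2m)^{-p}\int\rho_0^p\,dx = (2m)^{1-p}\int\rho_0^p\,dx$.

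For the reverse inequality, I would start from $\rho^*\in{\rm Adm}\,\Gamma^*$ and define $\rho_0(x) = \sum_{k=0}^{2m-1}\rho^*(\psi_k(x))$ for $x\in N_0^*$, extended by zero. Admissibility is immediate:
$$
\int_{\gamma_0^*}\rho_0\,ds = \sum_{k=0}^{2m-1}\int_{\gamma_k^*}\rho^*\,ds = \int_{\gamma^*}\rho^*\,ds \geq 1.
$$
The power-mean inequality $\bigl(\sum_{k=0}^{2m-1}a_k\bigr)^p\leq(2m)^{p-1}\sum a_k^p$, combined with the change of variables $y=\psi_k(x)$, then gives
$$
\int\rho_0^p\,dx \leq (2m)^{p-1}\sum_{k=0}^{2m-1}\int_{N_k^*}(\rho^*)^p\,dy = (2m)^{p-1}\int(\rho^*)^p\,dy,
$$
so $M_p(\Gamma_0^*)\leq(2m)^{p-1}M_p(\Gamma^*)$, which is the matching lower bound.

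The main work is in the setup rather than in any individual step; no part of the computation is delicate once the reduction by Lemma~\ref{szero} eliminates the possible double-counting at the reflection hyperplanes. This is essentially the standard symmetrization argument for moduli of curve families, with the exponents $1-p$ and $p-1$ arising from Hölder's inequality on $2m$ terms.
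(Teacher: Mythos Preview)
Your proof is correct and follows essentially the same route as the paper: the maps $\psi_k$ you introduce are exactly the paper's $f_k$, and both directions are obtained by the same transplantation of admissible densities, with the power-mean inequality $(\sum a_k)^p\le (2m)^{p-1}\sum a_k^p$ supplying the reverse bound. The only cosmetic differences are that you normalise by $1/(2m)$ up front in the upper bound (the paper does it after integrating) and that you phrase the Lemma~\ref{szero} reduction slightly more explicitly than the paper does.
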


\begin{proof}
According to lemma \ref{szero} we may assume that the family $\Gamma_0^*$ consists of the curves $\gamma_0^*$ such that
$$
s(\gamma_0^*\cap L_0^*\cap L_1^*)=0,
$$
where  $s$ is one-dimensional Hausdorf  measure. Let
$$
{\rm Int} N_k^*=\left\{[\rho, \theta,x']: \frac{\pi k}{m}<\theta<\frac{\pi(k+1)}{m}\right\}, \ k=0,1,\dots, 2m-1,
$$
be the interior of the angle $N_k^*$.  Denote by $f_k(z)$ the mapping from ${\rm Int} N_0^*$ onto ${\rm Int} N_k^*$ constructed by the following formula
$$
f_0(z)=z\ {\mathrm{and}}\  f_k(z)=\varphi_k (f_{k-1}(z)), \  k=1,\dots, 2m-1.
$$
Let $\rho_0^*\in {\rm Adm} \Gamma_0^*$. Then, for the function
$$
\rho(z)=\left\{\begin{array}{cl}
                 \rho_0^*(f_k^{-1}(z)), & z\in {\rm Int} N_k^*, \\
                 0, & z\in \cup_{k=0}^{2m-1}L_k^*
               \end{array}\right.
$$
we have
$$
\int_{\gamma^*}\rho ds = \sum_{k=0}^{2m-1}\int_{\gamma_k^*}\rho ds = 2m\int_{\gamma_0^*}\rho_0^* ds\geq 2m,
$$
hence $\dfrac{\rho}{2m}\in {\rm Adm} \Gamma^*.$ In view of
$$
\int_{\mathbb{R}^n}\rho^p d\mu = 2m \int_{N_0^*}(\rho_0^*)^p d\mu,
 $$
 we get
$$
(2m)^{1-p}\int_{N_0^*}(\rho_0^*)^p d\mu = \int_{\mathbb{R}^n}\frac{\rho^p}{(2m)^p}d\mu\geq M_p(\Gamma^*).
$$

If we take an infimum then we get
$$
(2m)^{1-p}M_p(\Gamma_0^*)\geq M_p(\Gamma^*).
$$

Now we are going to show the reverse inequality. Let $\rho\in {\rm Adm} \Gamma^*$. We construct a function $\rho_0^*(z)$ by the formula
$$
\rho^*_0(z)=\sum_{k=0}^{2m-1}\rho(f_k(z)), \ \ z\in {\rm Int} N_0^*.
$$
Then
$$
\int_{\gamma_0^*}\rho_0^*ds = \sum_{k=0}^{2m-1}\int_{\gamma_0^*}\rho(f_k(z))ds = \sum_{k=0}^{2m-1}\int_{\gamma_k^*} \rho ds = \int_{\gamma^*}\rho ds\geq 1.
$$
Hence
$$
M_p(\Gamma_0^*)\leq \int _{N_0^*}(\rho_0^*)^p d\mu = \int_{N_0^*}\left(\sum_{k=0}^{2m-1}\rho(f_k(z))\right)^p d\mu
$$
$$
\leq (2m)^{p-1}\sum_{k=0}^{2m-1}\int_{N_0^*}(\rho(f_k(z)))^p d\mu
$$
$$
=(2m)^{p-1}\sum_{k=0}^{2m-1}\int_{N_k^*} \rho^p d\mu =(2m)^{p-1}\int_{\mathbb{R}^n}\rho^p d\mu.
$$
To get the second line here we have applied the following inequality for the  mean values
$$
\left(\frac{t_0^p+t_1^p+\dots +t_{2m-1}^p}{2m}\right)^{\frac{1}{p}}\geq \frac{t_0+t_1+\dots+t_{2m-1}}{2m},
$$
where  $p>1$ and $t_j\geq 0$, $j=0,\dots,2m-1$, are non-negative numbers.

Taking an infimum over $\rho$ we get
$$
M_p(\Gamma_0^*)\leq (2m)^{p-1}M_p(\Gamma^*).
$$
Lemma is proved.
\end{proof}

Now we are going to use the dissymmertrization (see \cite[p. 32] {zbMATH00797804}).  We introduce a symmetric structure $\{P_l\}_{l=1}^N$ in $\overline{\mathbb{R}}^n$ as a collection of closed angles $P_l=\{[\rho, \theta, x'] : \theta_{l_1}\leq \theta \leq \theta_{l_2}\}$, $l=1,\dots, N$, satisfying the conditions:

$aP) \bigcup\limits_{l=1}^N P_l = \overline{\mathbb{R}}^n, \ \ \sum\limits_{l=1}^N (\theta_{l_2}-\theta_{l_1})=2\pi,$

$bP)\  \{\phi(P_l)\}_{l=1}^N=\{P_l\}_{l=1}^N$ for any isometry $\phi\in \Phi$.

Recall that the rotation by an angle $\beta$ is the transformation: $[\rho,\theta,x']\mapsto [\rho, \theta+\beta,x']$.   We call a collection of rotations $\{\alpha_l\}_{l=1}^{N}$ the {\it dissymmetrization} of the symmetric structure $\{P_l\}_{l=1}^{N}$ if the images $S_l=\alpha_l(P_l)$ satisfy the following conditions:

$aS) \bigcup\limits_{l=1}^{N} S_l = \overline{\mathbb{R}}^n$,

$bS)$ for every non-empty intersection $S_l\cap S_p$, $l,p = 1,\dots, N$, there exists

an isometry $\phi \in \Phi$ such that $\phi(\alpha_l^{-1}(S_l\cap S_p))=\alpha_p^{-1}(S_l\cap S_p)$.

Let $A$ be an arbitrary subset of $\overline{\mathbb{R}}^n$. We introduce the notation
$$
{\rm Dis} A = \bigcup_{k=1}^N \alpha_l (A\cap P_l).
$$

We also need the following lemma originally proved by Dubinin in the planar case (see for example \cite[Lemma 4.2]{Dub1}).
\begin{lem}\label{Dub}
Let $m\geq 1,$ $0\leq\theta_0<\theta_1<\ldots<\theta_{m-1}<2\pi, \theta_m=\theta_0+2\pi,$ $\Lambda_l=
\{[\rho,\theta,x']\in\mathbb{R}^n:\theta=\theta_l\}$  and   $\Lambda^*_l=
\{[\rho,\theta,x']\in\mathbb{R}^n:\theta=2\pi l/m\},\ l=0,...,m.$  Then there exists a symmetric structure $\{P_k\}_{k=1}^N$, $N\geq m$, and a dissymmetrization $\{\alpha_k\}_{k=1}^N$ such that ${\rm Dis} \Lambda_l^*=\Lambda_l,$ $l=0,\ldots,m-1$.
\end{lem}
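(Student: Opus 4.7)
My plan is to construct the symmetric structure as a sufficiently fine $\Phi$-invariant refinement of the equiangular partition, then to define the rotations $\alpha_k$ by forcing the boundary identification $\{\theta=2\pi l/m\}\leftrightarrow\{\theta=\theta_l\}$ on pieces adjacent to each equiangular ray and filling in elsewhere.

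First, I would take
$$
B_0=\{2\pi l/m:l=0,\ldots,m-1\}\cup \bigcup_{\phi\in\Phi}\phi\bigl(\{\theta_l:l=0,\ldots,m-1\}\bigr)\subset[0,2\pi),
$$
which is $\Phi$-invariant because $\{2\pi l/m\}$ already is. If necessary, I would enlarge $B_0$ to a larger finite $\Phi$-invariant set $B$ by adjoining $\Phi$-orbits of auxiliary angles. Enumerating $B=\{\psi_0<\psi_1<\cdots<\psi_{N-1}\}$ cyclically with $\psi_N=\psi_0+2\pi$ and setting $P_k=\{[\rho,\theta,x']:\psi_{k-1}\leq\theta\leq\psi_k\}$ for $k=1,\ldots,N$ then produces a symmetric structure with $N\geq m$; properties $aP$ and $bP$ are immediate from the $\Phi$-invariance of $B$.

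Next, I would define the rotations in two stages. For each $P_k$ whose closure meets some equiangular ray $\{\theta=2\pi l/m\}$, I would force $\alpha_k$ to be the rotation by the angle $\theta_l-2\pi l/m$; this gives $\alpha_k(\Lambda_l^*\cap P_k)\subset \Lambda_l$ at once and places $S_k=\alpha_k(P_k)$ inside an adjacent target sector. For each remaining $P_k$, I would choose $\alpha_k$ so that the images $S_k$ tile the residual gaps in the target partition left after the forced pieces have been placed. Condition $aS$ then holds by the tiling construction, and condition $bS$ holds at each interface $S_l\cap S_p$ meeting at angle $\theta^*$ because both $\alpha_l^{-1}(\theta^*)$ and $\alpha_p^{-1}(\theta^*)$ lie in the $\Phi$-invariant set $B$, hence are related by some $\phi\in\Phi$. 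The identity ${\rm Dis}\,\Lambda_l^*=\Lambda_l$ follows directly from the rotation prescribed at every forced piece touching $\Lambda_l^*$.

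The hard part will be ensuring that $B$ is fine enough so that the forced rotations do not drive two pieces onto the same sub-arc of the target and so that the unforced pieces can be matched against the residual gaps. I would handle this by successively enlarging $B$ with $\Phi$-orbits of auxiliary angles, each chosen smaller than the relevant gaps and than the shifts $|\theta_l-2\pi l/m|$, until every forced $P_k$ lies strictly inside its target sector. Once that is achieved, a combinatorial matching argument exploiting the $\Phi$-invariance of the multiset of sub-arc widths produces compatible rotations for the unforced pieces, completing the dissymmetrization.
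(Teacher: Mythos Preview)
The paper does not actually prove this lemma; it says the argument ``practically does not differ from the one in the planar case'' and cites Dubinin's Lemma~4.2 in \cite{Dub1}. So there is no in-paper argument to compare against---your task is effectively to reproduce Dubinin's planar construction in cylindrical coordinates.

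Your outline is in the right spirit, but the verification of condition $bS)$ contains a genuine gap. You claim $bS)$ holds because ``both $\alpha_l^{-1}(\theta^*)$ and $\alpha_p^{-1}(\theta^*)$ lie in the $\Phi$-invariant set $B$, hence are related by some $\phi\in\Phi$.'' That inference is invalid: $\Phi$-invariance of $B$ means $\Phi(B)=B$ as a set, not that $B$ is a single $\Phi$-orbit. Already your $B_0$ contains at least two distinct orbits---the orbit of $0$, namely $\{2\pi l/m\}_{l}$, and the generic $2m$-element orbit of each $\theta_l$---and every refinement only introduces further orbits. Boundary angles lying in different orbits cannot be related by any $\phi\in\Phi$, so $bS)$ can fail for a generic assignment of the $\alpha_k$.

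What makes $bS)$ work in Dubinin's argument is not the fineness of the partition but the specific \emph{coupling} of the rotations: the $P_k$ and $\alpha_k$ are built sector by sector, comparing each target width $\theta_{l+1}-\theta_l$ with $2\pi/m$ and placing the pieces so that every shared edge of adjacent $S_l,S_p$ pulls back to $\Phi$-equivalent edges of $P_l,P_p$ by construction. Your final paragraph calls the matching ``the hard part'' but frames it as a tiling issue (condition $aS)$) and offers no mechanism for this orbit compatibility. Without supplying that mechanism---which is exactly the content of Dubinin's explicit construction---the proposal is not yet a proof.
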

The proof of this lemma practically does not differ from the one in the planar case, so we omit it. In the following lemma we show that dissymmetrization preserves the module of a curve family.

\begin{lem}\label{dis}
If $\Gamma$ is a curve family in $\mathbb{R}^n$ and ${\rm Dis} \Gamma=\{{\rm Dis} \gamma:\gamma\in \Gamma\}$ is the result of the dissymmetrization of the family $\Gamma$ then
$$
M_p(\Gamma)=M_p({\rm Dis}\Gamma).
$$
\end{lem}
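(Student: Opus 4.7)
The plan is to view the dissymmetrization as a piecewise isometry of $\mathbb{R}^n$. By conditions $aP)$ and $aS)$ the closed sectors $\{P_l\}_{l=1}^N$ cover $\mathbb{R}^n$, and so do their rotated images $\{S_l\}=\{\alpha_l(P_l)\}$; in both cases pairwise intersections lie in finite unions of hyperplanes and therefore have Lebesgue measure zero. Each $\alpha_l$ is a rigid motion, hence preserves both Lebesgue measure (the ambient $L^p$ norm) and one-dimensional Hausdorff measure (the line integrals defining admissibility). Lemma \ref{szero} is the key device for handling the only thing that can fail, namely curves meeting some boundary hyperplane $\partial P_l$ or $\partial S_l$ in a set of positive $s$-measure: such curves form a subfamily of zero $p$-modulus, so they can be discarded from either $\Gamma$ or $\mathrm{Dis}\,\Gamma$ without affecting the claimed equality.

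First I would prove $M_p(\mathrm{Dis}\,\Gamma)\le M_p(\Gamma)$. Given $\rho\in\mathrm{Adm}\,\Gamma$, transplant it by setting
$$
\rho^*(y)=\rho(\alpha_l^{-1}(y)) \quad \text{for } y\in\mathrm{Int}\,S_l,
$$
and $\rho^*(y)=0$ on $\bigcup_l\partial S_l$ (a Lebesgue-null set). For any curve $\gamma\in\Gamma$ with $s(\gamma\cap\partial P_l)=0$ for every $l$, applying the isometries $\alpha_l$ piece by piece yields
$$
\int_{\mathrm{Dis}\,\gamma}\rho^*\,ds=\sum_{l=1}^N\int_{\alpha_l(\gamma\cap P_l)}\rho^*\,ds=\sum_{l=1}^N\int_{\gamma\cap P_l}\rho\,ds=\int_\gamma\rho\,ds\ge 1,
$$
so $\rho^*\in\mathrm{Adm}(\mathrm{Dis}\,\Gamma)$. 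A change of variables and the same partition give
$$
\int_{\mathbb{R}^n}(\rho^*)^p\,dx=\sum_{l=1}^N\int_{S_l}(\rho^*)^p\,dx=\sum_{l=1}^N\int_{P_l}\rho^p\,dx=\int_{\mathbb{R}^n}\rho^p\,dx,
$$
and taking the infimum over $\rho$ closes this direction.

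The reverse inequality $M_p(\Gamma)\le M_p(\mathrm{Dis}\,\Gamma)$ is obtained by the symmetric construction: starting from $\rho\in\mathrm{Adm}(\mathrm{Dis}\,\Gamma)$, set $\tilde\rho(z)=\rho(\alpha_l(z))$ on the interior of $P_l$ and run the same computation. The step I expect to be the main obstacle is making precise that the inverse of the dissymmetrization is a well-defined operation on curves modulo $s$-null sets. This is exactly where condition $bS)$ is used: on an overlap $S_l\cap S_p$ the two possible preimages $\alpha_l^{-1}(S_l\cap S_p)$ and $\alpha_p^{-1}(S_l\cap S_p)$ are related by an isometry $\phi\in\Phi$, hence lie on a common hyperplane of $\mathbb{R}^n$; the ambiguity is therefore supported on an $s$-null set that is again absorbed by Lemma \ref{szero}. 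Everything else — admissibility of $\tilde\rho$ and preservation of the $L^p$ norm — is the mirror image of the first half, and combining the two inequalities yields $M_p(\Gamma)=M_p(\mathrm{Dis}\,\Gamma)$.
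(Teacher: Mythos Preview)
Your proof is correct and matches the paper's: transplant admissible metrics through the piecewise isometries $\alpha_l$ in both directions, invoke Lemma~\ref{szero} to discard curves that meet the sector boundaries in positive $s$-measure, and note that admissibility and the $L^p$-norm are preserved. The only discrepancy is that the paper never calls on condition $bS)$ for this lemma---the reverse inequality needs only that the interiors of the $S_l$ (resp.\ $P_l$) are pairwise disjoint, which already follows from $aS)$ (resp.\ $aP)$) together with the angle-sum constraint; condition $bS)$ is reserved for the proof of Theorem~\ref{dis1}.
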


\begin{proof}
Let $\{P_l\}_{l=1}^{N}$ be a symmetric structure and $\{\alpha_l\}_{l=1}^N$ be its dissymmetrization, $\alpha_l(P_l)=S_l$.  According to Lemma \ref{szero}, we can assume that the  curves family  $\Gamma$ consists of curves $\gamma$ satisfying the condition $s\left(\cup_{l=1}^N(\partial P_l \cap\gamma)\right)=0.$ Hence ${\rm Dis}\Gamma$ consists of curves ${\rm Dis}\gamma$ satisfying the  similar condition $s\left(\cup_{l=1}^N(\partial S_l \cap {\rm Dis}\gamma)\right)=0.$ Denote by ${\rm Int} P_l$ the set $P_l\setminus \partial P_l$ and  by  ${\rm Int}  S_l$ the set $S_l\setminus \partial S_l$.

If $\rho$ is an admissible function for the curve family $\Gamma$ then
$$
\tilde{\rho}(x)=\left\{\begin{array}{l}\rho(\alpha_l^{-1}x), \ x\in {\rm Int} S_l, \ l=1,\dots,N,\\ 0, \ x\in \cup_{l=1}^N(\partial S_l ),
\end{array}\right.
$$
is admissible for ${\rm Dis}{\Gamma}$. Indeed, for ${\rm Dis}{\gamma}\in {\rm Dis}{\Gamma}$ we get
$$ \int_{{\rm Dis}\gamma }\tilde{\rho} ds=\sum_{l=1}^N\int_{{\rm Dis}{\gamma}\cap{\rm Int}{S}_l}  \tilde \rho ds=\sum_{l=1}^{N}\int_{\gamma\cap {\rm Int}{P}_l}\tilde {\rho}ds\geq 1.
$$
It is easy to see that
$$
\int_{\mathbb{R}^n}\tilde{\rho}^p dx =\sum_{l=1}^{N}\int_{{\rm Int}{S}_l}\tilde {\rho}^p dx = \sum_{l=1}^{N}\int_{{\rm Int}{P}_l}\rho^p dx =\int_{\mathbb{R}^n}\rho^p dx.
$$
By the definition of the $p$-modulus, we get
$$
M_p({\rm Dis}{\Gamma})\leq \int_{\mathbb{R}^n}\rho^p dx.
$$
Taking an infimum over all admissible functions $\rho$ we get $M_p({\rm Dis}{\Gamma})\leq M_p({\Gamma})$.

Similarly, any admissible for the curve family ${\rm Dis}{\Gamma}$ function $\tilde{\rho}(x)$  induces an admissible for ${\Gamma}$ function $\rho(x)$, moreover, again $\int_{\mathbb{R}^n}\tilde{\rho}^p dx=\int_{\mathbb{R}^n}\rho^p dx$. Therefore $M_p({\Gamma})\leq M_p({\rm Dis}{\Gamma})$. Lemma is proved.
\end{proof}

\section{Proofs}

\begin{figure}
\centering
\includegraphics[width=0.9\textwidth]{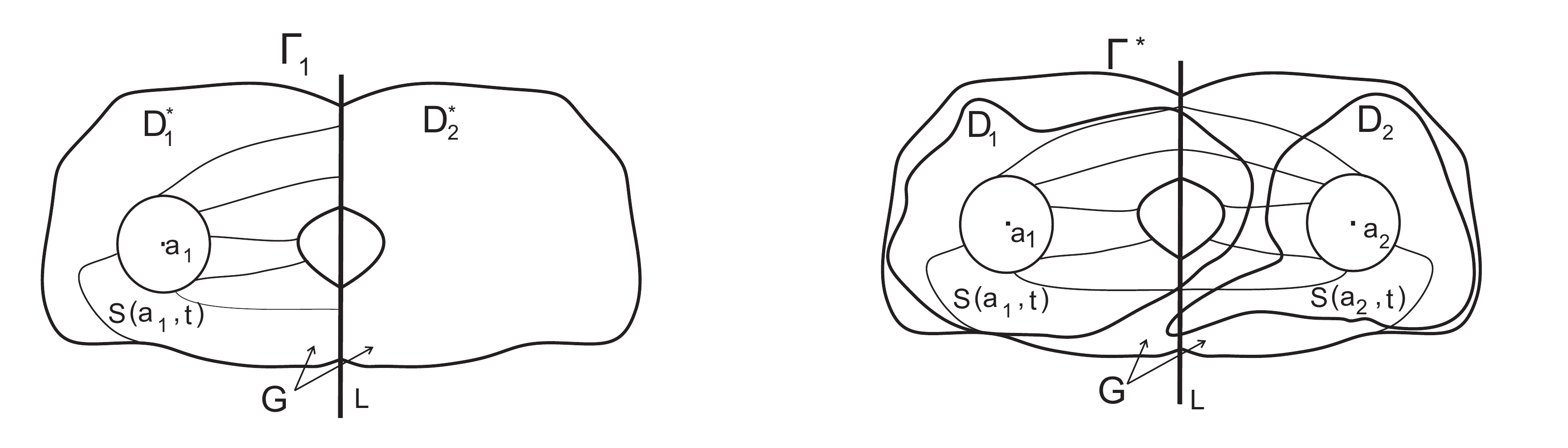}
 \caption{Families $\Gamma_1$ and $\Gamma^*$}
\end{figure}

\begin{proof}[\bf Proof of theorem 1.]  As above, $\Gamma(t,a,D)$ denotes  the family of all curves joining $S(a,t)$ and $\partial D$ in $D$. Let $\Gamma_1=\Gamma(t,a_1,D_1^*)$, $\Gamma^*=\{\gamma\cup\gamma^*: \gamma\in \Gamma_1, \gamma^* \ \text{is a reflection of}\  \gamma \ \text{in} \ L\}$ (see Figure 2). Consider a curve $\tilde{\gamma} \in \Gamma^*$ generated by a curve $\gamma\in \Gamma_1$. Assume that  ${\gamma}$ joins a point $a\in S(a_1,t)$ with a point $b\in \partial D_1^*.$ There are only two possibilities: either $b\in G\cap L$ or $b\in \partial G.$   In the first case $\tilde{\gamma}$ is a continuous curve that joins $S(a_1,t)$ with $S(a_2,t)$ in the set $G$. Hence, there is a subcurve of $\tilde{\gamma}$ that joins $S(a_1,t)$ and $\partial D_1$. In the second case  (when $\gamma$ joins $S(a_1,t)$ and $\partial G$), we have either the curve $\gamma$ itself joins $S(a_1,t)$ with $\partial D_1$ or  there is a point of intersection $\gamma\cap\partial D_1$. In both these cases there exists a subcurve of $\gamma\subset\tilde{\gamma}$ that joins $S(a_1,t)$ and $\partial D_1$. It means that in any case there exists a subcurve of $\tilde{\gamma}$ joining $S(a_1,t)$ and $\partial D_1$. Therefore  $\Gamma^*$ is longer than $\Gamma(t,a_1,D_1)$. Similarly $\Gamma^*$ is longer than $\Gamma(t,a_2,D_2)$. By Lemma \ref{refl}, $M_p(\Gamma^*)=2^{1-p}M_p(\Gamma_1)$ ($m=1$), and by property 5,
$$
M_p(\Gamma^*)^{\frac{1}{1-p}}\geq M_p(\Gamma(t,a_1,D_1))^{\frac{1}{1-p}}+M_p(\Gamma(t,a_2,D_2))^{\frac{1}{1-p}}
$$
or, equivalently,
$$
2M_p(\Gamma_1)^{\frac{1}{1-p}}\geq M_p(\Gamma(t,a_1,D_1))^{\frac{1}{1-p}}+M_p(\Gamma(t,a_2,D_2))^{\frac{1}{1-p}}.
$$
Multiplying  by $\lambda_n$ and subtract $2\mu_p(t)$, we get
$$
2(\lambda_nM_p(\Gamma_1)-\mu_p(t))\geq \sum_{i=1}^2 (\lambda_nM_p(t,a_i,D_i)-\mu_p(t)),
$$
taking a limit as $t\rightarrow 0$,  we obtain by (\ref{radmod})
$$
-2\mu_p(R_p(a_1,D_1^*))\geq -\mu_p(R_p(a_1,D_1))-\mu_p(R_p(a_2,D_2)).
$$
The theorem is proved.
\end{proof}

\begin{proof}[\bf Proof of theorem 2.]
\begin{figure}
\centering
\includegraphics[width=1\textwidth]{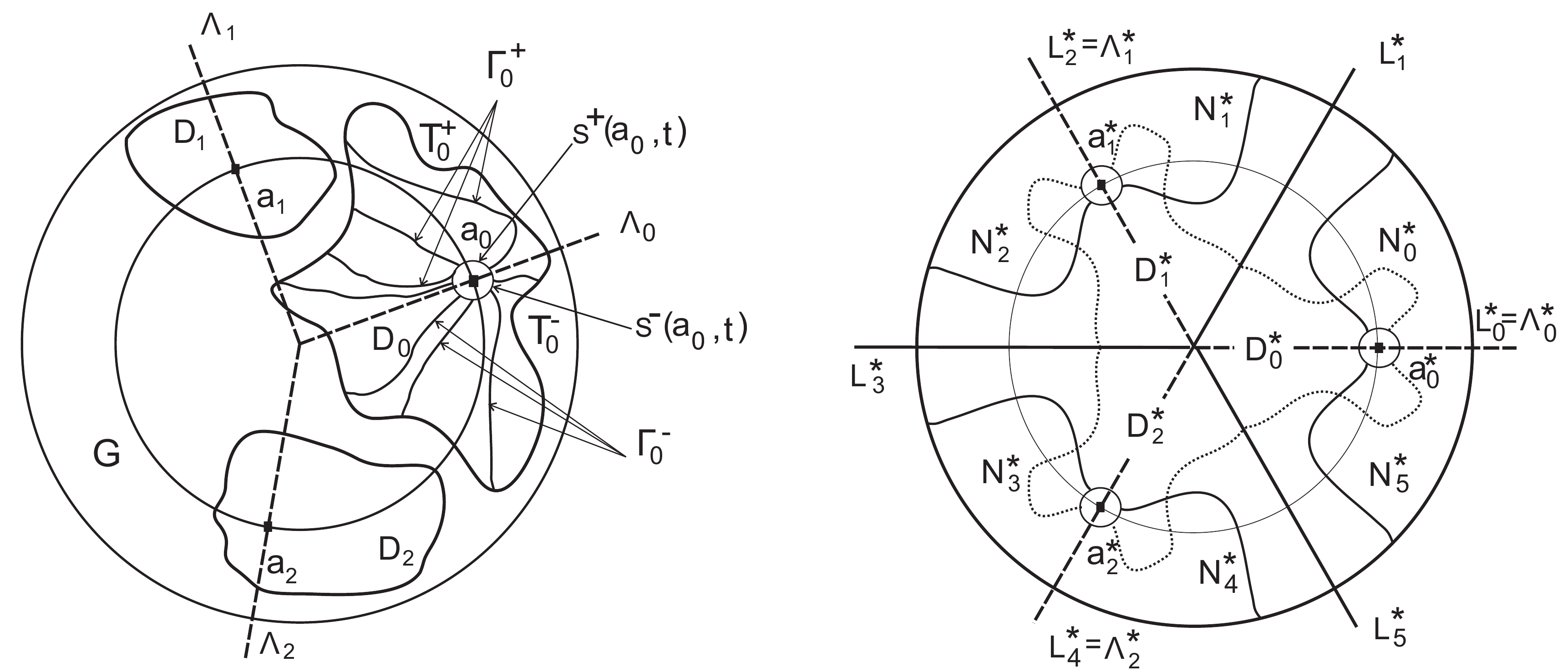}
\label{fig:fam1} \caption{Arbitrary and extremal configurations ($m=3$)}
\end{figure}
 Introduce the following notation
 $$
 a_l=[\rho_0,\theta_l,0], \ 0\leq \theta_l<2\pi,\
 \Lambda_l=\{[\rho,\theta, x']: \theta=\theta_l\}, \  l=0,\dots, m-1.
 $$
Without loss of generality we will assume that $\theta_0<\theta_1<\theta_2<\dots<\theta_{m-1}$. Let
$$
T_l^+=\{[\rho,\theta,x']:\theta_l\leq\theta\leq\theta_{l+1}\},\
T_l^-=\{[\rho,\theta,x']:\theta_{l-1}\leq\theta\leq\theta_l\}
$$
for $l=1,\dots, m-1$,  and
$$
T_0^+=\{[\rho,\theta,x']:\theta_0\leq\theta \leq\theta_1\}, \
T_0^-=\{[\rho,\theta,x']:\theta_{m-1}\leq\theta\leq\theta_0+2\pi\}.
$$
$$
S^+(a_l,t)=S(a_l,t)\cap T_l^+, \ \ S^-(a_l,t)=S(a_l,t)\cap T_l^-,
$$
$l=0,\dots,m-1$.
Moreover, let
$$
\Lambda^*_l=
\{[\rho,\theta,x']\in\mathbb{R}^n:\theta=2\pi l/m\},\ l=0,...,m,
$$
and, as above,
$$
L_k^*=\left\{[\rho, \theta, x']\in \mathbb{R}^n:\theta=\frac{\pi k}{m}\right\}, \ \ k=0,\dots, 2m-1,
$$
$$
N_k^*=\left\{[\rho,\theta,x']\in \mathbb{R}^n: \ \frac{\pi k}{m}\leq \theta \leq\frac{\pi (k+1)}{m} \right\}, \ k=0,1,\dots, 2m-1.
$$

Arbitrary and extremal configurations are depicted on Figure~3 for $m=3$. Denote by $\Gamma_0^*$ the family of all continuous curves $\gamma_0^*$ from $\Gamma(t,a_0^*,D_0^*)$ that join $S(a_0^*,t)\cap N_0^*$ with the boundary $\partial D_0^*$ in the set $N_0^*$ and $s(\gamma_0^*\cap L_0^*)=0$. Let $\Gamma_l^+$ ($\Gamma_l^-$) be families of all continuous curves $\gamma_l^+$ ($\gamma_l^-$) from  $\Gamma(t,a_l,D_l)$ joining $S^+(a_l,t)$ ($ S^-(a_l,t)$) with $\partial D_l$ in $T_l^+$ ($T_l^-$) and $s(\gamma_l^+\cap \Lambda_l)=0$, ($s(\gamma_l^-\cap \Lambda_l)=0$). By  lemma~\ref{Dub}, if $t$ is  small enough, there is a dissymmetrization  "moving" $\Lambda_l^*$ ($a_l^*\in \Lambda_{l}^*$) to $\Lambda_l$ such that ${\rm Dis}\, S(a_l^*,t)=S(a_l,t),$ $l=0,\dots m-1$. Such dissymmetrization is depicted on Figure~4.

As in Lemma \ref{refl}, we construct a family $\Gamma^*$ by $2m$ reflections of each curve $\gamma_0^*\in \Gamma_0^*$.
Show that ${\rm Dis} \Gamma^*$ is longer than $\Gamma_0^+$.

Let $\gamma^*\in \Gamma^*$ be a curve generated by a continuous curve $\gamma_0^*\in \Gamma_0^*$. If $\gamma_0^*$ joins $S(a_0^*,t)$ with $\partial G$ we supply it  by a curve $\gamma_1\subset \partial G$ such that $\gamma_0^*\cup \gamma_1$ joins $S(a_0^*,t)$ and $L_1^*$ in $P_0^*$. Note, that if $\gamma_0^*$ joins $S(a_0^*,t)$ with $L_1^*$ then this construction is   superfluous. Then we connect a point $z\in \gamma_0^*\cap S(a_0^*,t)$ with $L_0^*$ by a curve $\gamma_2\subset S(a_0^*,t).$

 Let $\tilde\gamma_0^*=\gamma_1\cup\gamma^*_0\cup\gamma_2.$ Similarly as in Lemma \ref{refl}, we construct $\tilde{\gamma}^*$ for the curve $\tilde{\gamma}_0^*$. The  continuous curve $\tilde{\gamma}^*$ joins successively the hyperspheres $S(a_l^*,t)$, $l=0,\dots,m-1$ so that the upper hemihypersphere $S^+(a_l^*,t)$ is connected with the lower hemihypersphere  $S^-(a^*_{l+1},t)$ ($a_{m}^*=a_0^*$). Since the  symmetry of $\tilde{\gamma}^*$ taking into account the property $bS$ of dissymmetrization we get that the curve ${\rm Dis} \tilde{\gamma}^*$ contains a continuous curve joining the hemihyperspheres $S^+(a_0,t)$ and $S^-(a_1,t)$ in $T_0^+$. By the conditions of the theorem,  $D_0$ contains the ball $B(a_0,t)$, $D_1$ contains $B(a_1,t)$ and $D_0\cap D_1=\emptyset$. Therefore there is a subcurve $\gamma\subset {\rm Dis} \tilde{\gamma}^*$ joining $S^+(a_0,t)$ and $\partial D_0$ in the set $T_0^+$. Let $b_0$ be a point from $S^+(a_0,t)\cap \gamma$ and $b_1\in \gamma\cap \partial D_0$ be the  closest point  to  $b_0$  of $\partial D_0$ on $\gamma$.    Then the part of $\gamma$ between $b_0$ and $b_1$  contains a continuous curve joining $\partial D_0$ and $S^+(a_0,t)$ in $D_0\setminus \overline{B(a_0,t)}$. This curve is a subcurve of $\rm{Dis}\gamma^*$ and belongs to $\Gamma_0^+.$ Therefore, ${\rm Dis} \Gamma^*$ is longer than $\Gamma_0^+.$

\begin{figure}
\centering
\includegraphics[width=1\textwidth]{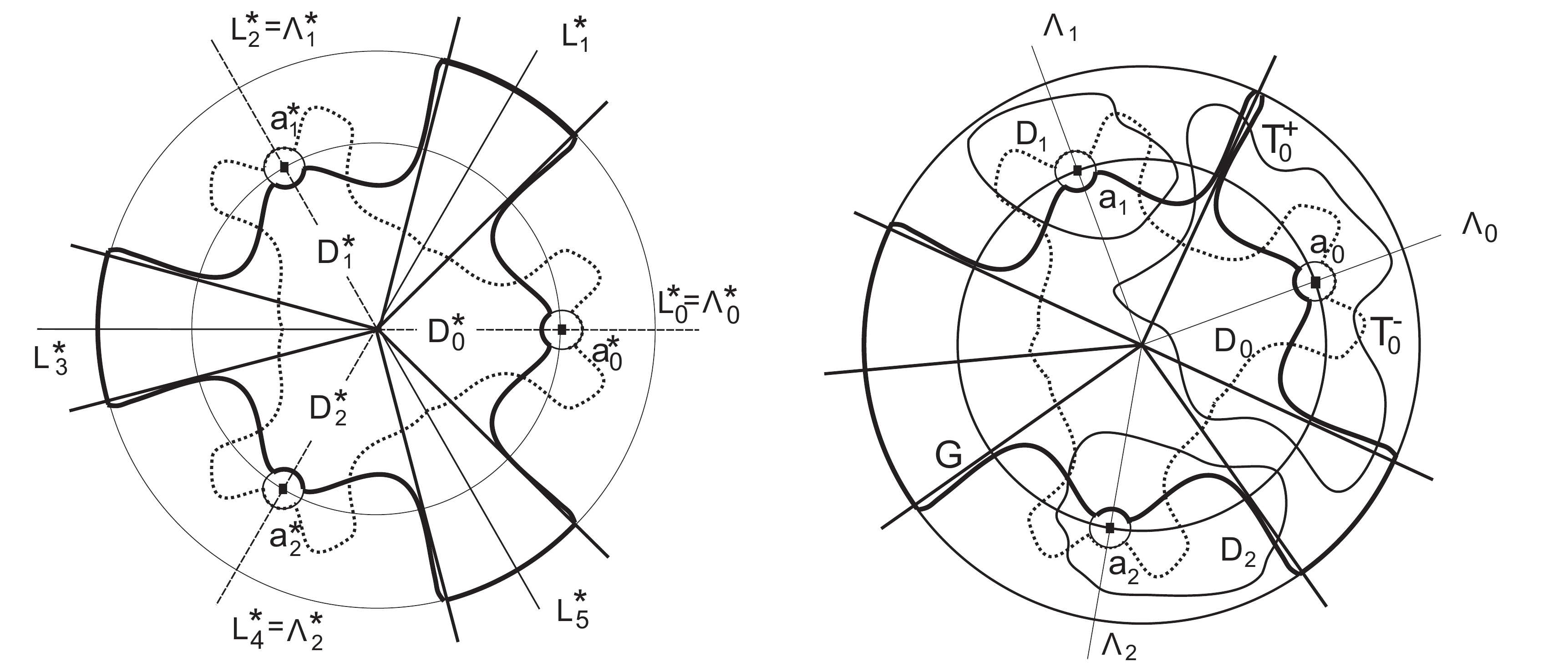}
\label{fig:diss} \caption{Dissymmetrization}
\end{figure}

Similarly, ${\rm Dis} \Gamma^*$ is longer than $\Gamma_l^+$ and $\Gamma_l^-$ for all $l=0,\dots, m-1$. The families $\Gamma_l^+$ and $\Gamma_l^-$ are separated. By  property 5, Lemma \ref{dis} and Lemma \ref{refl},  we get the following inequality
$$
\left((2m)^{1-p}M_p(\Gamma_0^*)\right)^{\frac{1}{1-p}}=M_p(\Gamma^*)^{\frac{1}{1-p}}= M_p({\rm Dis} \Gamma^*)^{\frac{1}{1-p}}
$$
$$
\geq \sum_{l=0}^{m-1}\left(M_p(\Gamma_l^+)^{\frac{1}{1-p}}+M_p(\Gamma_l^-)^{\frac{1}{1-p}}\right).
$$
On the other hand, $\Gamma_l^+$ and $\Gamma_l^-$ both are longer than $\Gamma(t,x_l,D_l)$. By  property~4,
$$
M_p(\Gamma(t,a_l,D_l))\geq M_p(\Gamma_l^+)+M_p(\Gamma_l^-).
$$
If $d<0,\ u\geq 0,\ v\geq 0,$ then the following inequality for means
$$
\left(\frac{u^d+v^d}{2}\right)^{\frac{1}{d}}\leq \frac{u+v}{2}
$$
holds, or equivalently
$$
u^d+v^d\geq 2^{1-d}(u+v)^d.
$$
Applying this inequality for $d=\frac{1}{1-p}$ we get
$$
2mM_p(\Gamma_0^*)^{\frac{1}{1-p}}\geq \sum_{l=0}^{m-1}\left(M_p(\Gamma_l^+)+M_p(\Gamma_l^-)\right)^{\frac{1}{1-p}}2^{1-\frac{1}{1-p}}\geq
$$
$$
2^{1-\frac{1}{1-p}}\sum_{l=0}^{m-1}M_p(\Gamma(t, a_l,D_l))^{\frac{1}{1-p}}.
$$
It can be rewritten in the following form
$$
m(2M_p(\Gamma_0^*))^{\frac{1}{1-p}}\geq\sum_{l=0}^{m-1}M_p(\Gamma(t,a_l,D_l))^{\frac{1}{1-p}}.
$$
By the principle of symmetry \cite[Lemma 5.20, p. 55]{Vu} taking into account Lemma~\ref{szero}, we get
$$
2M_p(\Gamma_0^*)=M_p(\Gamma(t,a_0^*,D_0^*)).
$$
Therefore,
$$
mM_p(\Gamma(t, a_0^*,D_0^*))^{\frac{1}{1-p}}\geq \sum_{l=0}^{m-1}M_p(\Gamma(t,a_l,D_l)).
$$
We multiply this inequality by $\lambda_n$ and subtract $m \mu_p(t)$
$$
m\left(\lambda_n M_p(\Gamma_t(a_0^*,D_0^*)^{\frac{1}{1-p}})-\mu_p(t)\right)\geq \sum_{l=0}^{m-1} \left(\lambda_n M_p(\Gamma(t,a_l,D_l))^{\frac{1}{1-p}}-\mu_p(t)\right).
$$
Taking a limit as $t\rightarrow 0$ we obtain
$$
-m \mu_p(R_p(a_0^*,D_0^*))\geq -\sum_{l=0}^{m-1}\mu_p(R_p(a_l,D_l)).
$$
 Theorem is proved.
 \end{proof}

 This work has been supported by the Russian Science Foundation under project 14-11-00022

Sergei Kalmykov
\\
Department of Mathematics, Shanghai Jiao Tong University,
800 Dongchuan RD, Shanghai, 200240, China
\\
email address: \href{mailto:sergeykalmykov@inbox.ru}{sergeykalmykov@inbox.ru}

\medskip{}

Elena Prilepkina
\\
Institute of Applied Mathematics, FEBRAS, 7 Radio Street, Vladivostok,
690041, Russia
\\
and
\\
Far Eastern Federal University, 8 Sukhanova Street, Vladivostok, 690950,
Russia
\\
email address: \href{mailto:pril-elena@yandex.ru}{pril-elena@yandex.ru}

\end{document}